\documentclass[12pt,reqno]{amsart}
\usepackage{amssymb}
\usepackage{graphicx}
\usepackage{xcolor}
\usepackage{longtable}
\usepackage{float}

\usepackage[all]{xy}
\DeclareFontFamily{U}{mathb}{\hyphenchar\font45}
\DeclareFontShape{U}{mathb}{m}{n}{
      <5> <6> <7> <8> <9> <10> gen * mathb
      <10.95> mathb10 <12> <14.4> <17.28> <20.74> <24.88> mathb12
      }{}
\DeclareSymbolFont{mathb}{U}{mathb}{m}{n}
\DeclareMathSymbol{\righttoleftarrow}{3}{mathb}{"FD}

\oddsidemargin 15mm
\evensidemargin 15mm
\textwidth 130mm

\theoremstyle{plain}
\newtheorem{prop}{Proposition}[section]
\newtheorem{theo}[prop]{Theorem}
\newtheorem{coro}[prop]{Corollary}
\newtheorem{lemm}[prop]{Lemma}
\theoremstyle{remark}

\theoremstyle{definition}

\newtheorem{rema}[prop]{Remark}

\newtheorem{exam}[prop]{Example}
\numberwithin{equation}{section}

\newcommand\cO{{\mathcal O}}

\newcommand\cT{{\mathcal T}}

\newcommand\bA{{\mathbb A}}
\newcommand\bG{{\mathbb G}}
\newcommand\bP{{\mathbb P}}
\newcommand\bQ{{\mathbb Q}}

\newcommand\bZ{{\mathbb Z}}
\newcommand\bR{{\mathbb R}}

\newcommand{\G}{{\mathbb G}}
\newcommand\rE{{\mathrm E}}
\newcommand\rH{{\mathrm H}}

\DeclareMathOperator{\image}{im}
\DeclareMathOperator{\coker}{coker}

\DeclareMathOperator{\Br}{Br}

\DeclareMathOperator{\Pic}{Pic}
\DeclareMathOperator{\Hom}{Hom}
\DeclareMathOperator{\End}{End}
\DeclareMathOperator{\Aut}{Aut}
\DeclareMathOperator{\Ext}{Ext}
\DeclareMathOperator{\Spec}{Spec}
\DeclareMathOperator{\Ker}{Ker}

\newcommand\lra{\longrightarrow}
\newcommand\GL{\mathsf{GL}}
\newcommand\PGL{\mathsf{PGL}}
\newcommand{\eqto}{\stackrel{\lower1.5pt\hbox{$\scriptstyle\sim\,$}}\to}

\begin{document}
\title[Equivariant unirationality]{Equivariant unirationality of toric varieties}

\author[A. Kresch]{Andrew Kresch}
\address{
  Institut f\"ur Mathematik,
  Universit\"at Z\"urich,
  Winterthurerstrasse 190,
  CH-8057 Z\"urich, Switzerland
}
\email{andrew.kresch@math.uzh.ch} 

\author[Yu. Tschinkel]{Yuri Tschinkel}
\address{
  Courant Institute,
  251 Mercer Street,
  New York, NY 10012, USA
}
\email{tschinkel@cims.nyu.edu}

\address{Simons Foundation\\
160 Fifth Avenue\\
New York, NY 10010\\
USA}

\date{June 6, 2025}

\begin{abstract}
We introduce a torsor-theoretic obstruction to equivariant unirationality and show that it is also sufficient for actions of finite groups on toric varieties arising from automorphisms of the torus. 
\end{abstract}

\maketitle

\section{Introduction}
\label{sec.intro}
Let $X$ be a smooth projective variety over an algebraically closed field $k$ of characteristic zero, equipped with a regular action of a finite group $G$. Such a variety if called $G$-unirational if there exists a dominant equivariant rational map to $X$ from the projective space associated with a linear representation of $G$.
This property should be viewed as the equivariant analog of unirationality of varieties over a not necessarily algebraically closed base field.
Both points of view have been pursued, starting with \cite{manin}. An explicit connection between equivariant geometry and geometry over nonclosed fields was established in \cite[Thm.\ 1.1]{DR}: A variety $X$ with regular $G$-action is 
$G$-unirational over $k$ if and only if for every field extension $K/k$ and every $G$-torsor $T$ over $K$, the $T$-twist of $X$ is unirational over $K$. However, this criterion is not easy to implement, in practice. Our focus is on computable obstructions to equivariant unirationality. 

The first and most obvious necessary condition for $G$-unirationality is 
\begin{itemize}
\item {\bf Condition (A)}:
for every abelian subgroup $H\subseteq G$ one has 
$$
X^H\neq \emptyset.
$$
\end{itemize}
Indeed, linear representations of abelian groups have fixed points, which forces fixed points on the image $X$.   
Note that Condition {\bf (A)} is a $G$-birational property, while the existence of $G$-fixed points is not, for nonabelian $G$. 

There are results showing that Condition {\bf (A)} is also sufficient, for regular generically free $G$-actions on  
del Pezzo surfaces of degree $\ge 3$ \cite[Thm.\ 1.4]{Duncan} and on
smooth quadric threefolds and intersections of two quadrics in $\bP^5$ \cite{CTZ-uni}.

Another necessary condition for $G$-unirationality is the existence of $G$-linearizations of line bundles with invariant class in $\Pic(X)$.
This cohomological obstruction, the Amitsur group, is recalled in Section \ref{sect:coho}, along with a higher analog whose vanishing is also necessary for $G$-unirationality.

In this note, we introduce a further obstruction to $G$-unirationality, 
inspired by the formalism of torsors over nonclosed fields \cite{CTSduke}, adapted to the equivariant setting in \cite{HT-torsor}.
The obstruction is formulated in terms of liftability of the $G$-action to a universal torsor and is equivalent to the nonvanishing of a certain explicit, computable cohomology class; see
\eqref{eqn.delIdzero} and Proposition~\ref{prop.delId}. We show that for toric varieties, with $G$-action  arising from automorphisms of the torus, 
the vanishing of this obstruction is also sufficient for $G$-unirationality (Theorem~\ref{thm:main}). 

After recalling background material in Section \ref{sect:coho} we discuss $G$-tori and their torsors in Section \ref{sec.Gtori} and
$G$-actions on toric varieties arising from 
equivariant compactifications of torsors under $G$-tori
in Section \ref{sect:toric}.
In Section \ref{sect:torsors}, we recall 
the universal torsor formalism, in the setting of $G$-equivariant geometry, and provide a recipe to compute the obstruction class to liftability of the $G$-action to a universal torsor.
The main theorem is stated and proved in Section \ref{sect:main}.
In Section \ref{scn.projectiveunirationality} we discuss the related condition of projective $G$-unirationality, introduced in \cite{tz}.
Several results make use of facts from homological algebra which, while standard, are included in an Appendix for the sake of completeness.

\medskip

\noindent
{\bf Acknowledgments:} 
The second author was partially supported by NSF grant 2301983.

\section{Equivariant geometry and cohomology}
\label{sect:coho}
Let $X$ be a smooth projective rational variety over $k$, with a regular action of a finite group $G$. Our convention is that $G$-actions on algebraic varieties are right actions.
Let $\Pic(X)$ be the Picard group of $X$, a $G$-module. Let $[X/G]$ be the quotient stack, and 
$$
\rH^1([X/G],\mathbb G_m) = \Pic(X,G)
$$
its Picard group, that can also be interpreted as the group of isomorphism classes of $G$-linearized line bundles on $X$. 

We recall the 
Leray spectral sequence for $G$-actions (see, e.g., \cite[\S 3]{KT-dp}):
\begin{align}
\begin{split}
\label{eqn:BrXG}
0&\to  \Hom(G,k^\times)\to \Pic(X,G)\to 
\Pic(X)^G \stackrel{\delta_2}{\lra}  \rH^2(G,k^\times)\\
&\qquad\quad \quad \to \Br([X/G])\to \rH^1(G,\Pic(X))\stackrel{\delta_3}{\lra}  \rH^3(G,k^\times).
\end{split}
\end{align} 
The sequence \eqref{eqn:BrXG} gives rise to the following invariants:
\begin{itemize}
\item The \emph{Amitsur group} $\mathrm{Am}(X,G)=\mathrm{Am}^2(X,G)$, defined to be the image of $\delta_2$ \cite[\S 6]{BC-finite};
\item The higher Amitsur group $\mathrm{Am}^3(X,G):=\mathrm{Im}(\delta_3)$ \cite[\S 3]{KT-dp}.  
\end{itemize}
The Amitsur group $\mathrm{Am}^2(X,G)$
is a stable $G$-birational invariant.
The same holds for $\rH^1(G,\Pic(X))$ as well as its
image
$\mathrm{Am}^3(X,G)$. 
By functoriality, the groups
$\mathrm{Am}^j(X,G)$, for $j=2$ and $3$, vanish
when $X$ has a fixed point.
From \cite{tz} we have:
\begin{prop}
\label{prop:obstr}
Let $Y\to X$ be a $G$-equivariant morphism of smooth projective varieties
with a regular $G$-action. Then 
$$
\mathrm{Am}^j(X,G)\subseteq \mathrm{Am}^j(Y,G), \quad j=2,3.
$$
\end{prop}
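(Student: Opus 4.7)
The plan is to derive the two inclusions from naturality of the Leray spectral sequence underlying \eqref{eqn:BrXG}. The $G$-equivariant morphism $f \colon Y \to X$ descends to a morphism of quotient stacks $[Y/G] \to [X/G]$ over $BG$, which induces a morphism from the Leray spectral sequence for $X$ to that for $Y$, of the form $E_2^{p,q} = \rH^p(G, \rH^q(-, \bG_m)) \Rightarrow \rH^{p+q}([-/G], \bG_m)$.

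I would next identify the induced maps on the relevant $E_2$ terms. On $E_2^{p,0} = \rH^p(G, k^\times)$ the map is the identity, since $\rH^0(-, \bG_m) = k^\times$ for both $X$ and $Y$ and $f^*$ acts trivially on global units. On $E_2^{p,1} = \rH^p(G, \Pic(-))$ it is induced by the $G$-equivariant Picard pullback $f^* \colon \Pic(X) \to \Pic(Y)$. Naturality of the spectral sequence, and in particular of the resulting seven-term exact sequence \eqref{eqn:BrXG}, then gives commuting squares
\[
\xymatrix{
\Pic(X)^G \ar[r]^-{\delta_2^X} \ar[d]_{f^*} & \rH^2(G, k^\times) \ar@{=}[d] \\
\Pic(Y)^G \ar[r]^-{\delta_2^Y} & \rH^2(G, k^\times)
}
\qquad
\xymatrix{
\rH^1(G, \Pic(X)) \ar[r]^-{\delta_3^X} \ar[d]_{f^*} & \rH^3(G, k^\times) \ar@{=}[d] \\
\rH^1(G, \Pic(Y)) \ar[r]^-{\delta_3^Y} & \rH^3(G, k^\times)
}
\]

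From these squares the desired inclusions follow at once: any element of $\mathrm{Am}^j(X, G) = \image(\delta_j^X)$ is of the form $\delta_j^X(\alpha) = \delta_j^Y(f^*\alpha)$, and so lies in $\image(\delta_j^Y) = \mathrm{Am}^j(Y, G)$, for $j = 2, 3$. The only nontrivial ingredient is the naturality of the Leray spectral sequence with respect to $G$-equivariant morphisms, which is the standard functoriality of the Grothendieck composite-functor spectral sequence for the factorizations $[X/G] \to BG$ and $[Y/G] \to BG$; I anticipate no real obstacle, as the statement is essentially a formal consequence of the construction of \eqref{eqn:BrXG}.
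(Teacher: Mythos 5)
Your argument is correct: the paper itself gives no proof of this proposition (it is quoted from \cite{tz}), but functoriality of the Leray spectral sequence for $[Y/G]\to[X/G]\to BG$, with the identity on $E_2^{p,0}=\rH^p(G,k^\times)$ and $f^*$ on $E_2^{p,1}$, is exactly the mechanism the paper alludes to ("by functoriality\dots") and uses again in the proof of Proposition~\ref{prop.delId}. The commuting squares you write down immediately give $\image(\delta_j^X)\subseteq\image(\delta_j^Y)$, so there is nothing to add.
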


We recall basic conditions on the $G$-action.
These are formulated here, in a manner that allows $G$-actions with nontrivial generic stabilizer, though often our main interest is in generically free $G$-actions.
A $G$-action is
\begin{itemize}
\item[({\bf L})] {\em linearizable} if there exists a $G$-representation $V$ such that $X$ is equivariantly birational to $\bP(V^\vee)$,
\item[({\bf SL})] {\em stably linearizable} if $X\times \bP^n$ is linearizable, for some $n$, with trivial action on the second factor, 
\item[({\bf U})] {\em unirational} if there exists a representation $V$ of $G$ and a dominant equivariant rational map $\bP(V^\vee)\dashrightarrow X$. 
\end{itemize}

The following cohomological obstructions are applicable.
\begin{itemize}
\item If the $G$-action on $X$ is stably linearizable, then $\mathrm{Am}^j(X,H)=0$,
for $j=2$ and $3$ and  $\rH^1(H,\Pic(X))=0$, for all $H\subseteq G$.
\item If the action is unirational, then $\mathrm{Am}^j(X,H)=0$, $j=2$, $3$, for all $H\subseteq G$. 
\end{itemize}
We recall, as well, that Condition \textbf{(A)} (Section \ref{sec.intro}) is necessary for unirationality (hence also for stable linearizability).

\begin{exam}
\label{exam:p1}
Let $X=\bP^1$ with projective linear $G$-action $\lambda\colon G\to \PGL_2$ with image the Klein four-group $C_2^2$,
and let $H=\ker(\lambda)$.
The action is linearizable if and only if $[G,H]$ is strictly contained in $[G,G]$; otherwise it has nontrivial Amitsur group and thus is not unirational.
(Use an extension to $H/[G,H]$ of the nontrivial character of $[G,G]/[G,H]\cong C_2$ to linearize $\lambda$.)
\end{exam}

\begin{rema} 
\label{rema:possible}
An example of a del Pezzo surface of degree $3$ with nontrivial $\mathrm{Am}^3(X,G)$ is given in \cite[\S 5.3]{KT-dp}; 
this example fails Condition {\bf (A)}.
A complete analysis of this invariant for del Pezzo surfaces can be found in \cite{tz}; there exist del Pezzo surfaces of degree $2$ satisfying Condition {\bf (A)}, with nontrivial $\mathrm{Am}^3(X,G)$ for $G=\mathfrak{Q}_8$, the quaternion group.  
\end{rema}

\begin{rema}
The groups $\mathrm{Am}^j(X,G)$, for $j=2$ and $3$, are effectively computable, starting from the knowledge of a $G$-invariant collection of divisors, spanning $\Pic(X)$, and their relations.
The computation is described in \cite{KT-Brauer} and is also recalled in Section \ref{sect:toric}. 
\end{rema}

\begin{rema}
\label{rem.uniratPic}
Unirational actions with nontrivial $\rH^1(G,\Pic(X))$ exist, for instance, among del Pezzo surfaces, by the main results of \cite{Duncan} and \cite{BogPro}, or toric varieties, for which existence of a $G$-fixed point implies $G$-unirationality, by \cite[\S 3.6, \S 4.2]{HT-torsor}. 
\end{rema}

\section{$G$-tori and cohomology}
\label{sec.Gtori}
We continue to work over an algebraically closed field $k$ of characteristic zero. 

Let $S\cong \bG_m^n$ be an algebraic torus,
with character lattice $L=\mathfrak X^*(S)$.
Let $G$ be a finite group.
A $G$-torus is a torus $S$ with $G$-action by automorphisms that fix $1\in S$ (but we do not assume that the action is generically free). 
Associated with this action is an
induced left action
\[ G \to \GL(L). \]
Conversely, such a homomorphism allows us to recover a structure of a $G$-torus, via $S\cong L^\vee\otimes \G_m$.

To a $G$-torus $S$ there is an associated
sheaf of abelian groups on the
\'etale site of the Deligne-Mumford stack $BG$,
which we also denote by $S$.
We have
\[ \rH^i(BG,S)=\rH^i(G,S(k)) \]
(consequence of the \v Cech spectral sequence for $\Spec(k)\to BG$).

Let $X$ be a smooth projective variety with a regular action of $G$; then $S$ will also denote the corresponding sheaf of abelian groups on the \'etale site of the Deligne-Mumford stack $[X/G]$.
We introduce the notation
\[ \rH^i_G(X,S):=\rH^i([X/G],S). \]
For $i=1$, this has an interpretation as the group of isomorphism classes of $G$-equivariant $S$-torsors on $X$.
However, there is no direct description in terms of group cohomology.
Rather, the Leray spectral sequence
\[ \rE_2^{pq}=\rH^p(G,\rH^q(X,S))\Rightarrow \rH^{p+q}_G(X,S) \]
involves a mixture of group cohomology and classical sheaf cohomology, a phenomenon already observed in a slightly different context in \cite[\S 4]{DM}.

We have
$\rH^q(X,S)=L^\vee\otimes\rH^q(X,\G_m)$.
The low-degree terms form the exact sequence
\begin{align}
\begin{split}
\label{eqn.LPic}
0&\to \rH^1(G,L^\vee\otimes k^\times)\to
\rH^1_G(X,S) \\
&\quad\quad\quad\to (L^\vee\otimes\Pic(X))^G\stackrel{\partial}\to \rH^2(G,L^\vee\otimes k^\times)\to \rH^2_G(X,S),
\end{split}
\end{align}
see also \cite[\S 3.4]{HT-torsor}. 

\begin{rema}
\label{rem.fixedpoint}
If $X$ has a fixed point for the $G$-action, then by basic functoriality the rightmost map in \eqref{eqn.LPic} is injective, thus $\partial$ is trivial.
\end{rema}

\section{Toric varieties}
\label{sect:toric}
Let $X$ be a smooth projective equivariant compactification of an algebraic torus $T\simeq \bG_m^d$ with a regular action of a finite group $G$.
Let $M:=\mathfrak X^*(T)$.
We have a basic exact sequence of torsion-free $G$-modules
\begin{equation}
\label{eqn.MPPic}
0\to M\to P \to \Pic(X)\to 0,  
\end{equation}
where $P$ is the permutation module of toric divisors. We also have
\begin{equation}
\label{eqn.kkTM}
1\to k^\times \to k[T]^\times \to M\to 0. 
\end{equation}
Combining these sequences we obtain the diagrams
\[
\xymatrix@C=12pt@R=15pt{
&&
\rH^1(G, k[T]^\times)\ar[d] & \\
P^G \ar[r] &
\Pic(X)^G \ar[r]\ar[dr]_{\delta_2} &
\rH^1(G,M) \ar[r] \ar[d] &
0 \\
&&\rH^2(G,k^\times)&
}
\]
and
\[
\xymatrix@C=12pt@R=15pt{
&&
\rH^2(G, k[T]^\times)\ar[d] & \\
0 \ar[r] &
\rH^1(G,\Pic(X)) \ar[r]\ar[dr]_{\delta_3} &
\rH^2(G,M) \ar[r] \ar[d] &
\rH^2(G,P) \\
&&\rH^3(G,k^\times)&
}
\]
with exact rows and columns.
The triangle in each diagram commutes, by Lemma \ref{lem.twoterm}.
The second diagram already appears in \cite[\S 4]{KT-Brauer}.

The case where $T$ has a $G$-fixed point, without loss of generality $1\in T$, is that of a
\emph{toric action}, where $X$ is an equivariant compactification of a $G$-torus;
these are classified in small dimensions \cite{kunyavskii}, \cite{hoshiyamasaki}, and have also been studied from the viewpoint of equivariant birational geometry \cite{KT-toric}.
In this case, sequence \eqref{eqn.kkTM} admits an equivariant splitting, and $\delta_2$ and $\delta_3$ vanish.
By way of contrast, here we allow the more general actions considered in \cite{HT-torsor}, where $X$ is an equivariant compactification of a torsor under a $G$-torus.

\begin{exam}
\label{exa.U11withsigns}
We verify the nontriviality of $\mathrm{Am}^3(X,G)$ for $X$ an equivariant compactification of $T=\G_m^3$, with action of $G:=\mathfrak{Q}_8$, where respective generators $x$, $y\in G$ act by
\[ (\alpha,\beta,\gamma)\mapsto \bigl(-\frac{1}{\alpha\beta\gamma},-\gamma,\beta\bigr),\quad
\bigl(\alpha,\beta,\gamma)\mapsto (\gamma,-\frac{1}{\alpha\beta\gamma},-\alpha\bigr). \]
We take $X$ to be the smooth projective toric variety, for a fan in
$N_{\bR}\simeq \bR^3$, $N:=\mathfrak{X}_*(T)$ with ray generators
\begin{gather*}
e_1, e_2, e_3, -e_1, -e_2, -e_3, \\
e_2-e_1, e_1-e_2, e_3-e_1, e_1-e_3, e_3-e_2, e_2-e_3, \\
e_1+e_2-e_3, e_1+e_3-e_2, e_2+e_3-e_1, \\
-e_1-e_2+e_3, -e_1-e_3+e_2, -e_2-e_3+e_1.
\end{gather*}
Just the ray generators on the first two lines yield a fan with $14$ three-dimensional cones, $8$ simplicial and $6$ with four generators;
cf.\ \cite[\S 2, Case P]{kunyavskii} (where $e_i$ is called $g_i$).
The rays from the last two lines subdivide the
cones with four generators.
Without the minus signs in the formula for the action, we would have the toric action given in \cite{kunyavskii} as $U_1$ (with respect to coordinates $\alpha'=\alpha\beta$, $\beta'=\alpha\beta\gamma$, $\gamma'=\alpha$).
But the signs play no role for the
action on $M$, on the permutation module of toric divisors, or on $\Pic(X)\cong \bZ^{15}$, thus from \cite[\S 3]{kunyavskii} we obtain
\[ \rH^1(G,\Pic(X))=\rH^1(G/\langle x^2\rangle, \Pic(X))\cong \bZ/2\bZ. \]
The abelian subgroups of $G$ are cyclic, so the action satisfies Condition \textbf{(A)}, as does indeed any action of $G$ on a rational variety.

For computations in group cohomology we use the periodic resolution given in \cite[\S XII.7]{cartaneilenberg},
which yields $\rH^j(G,R)$ as the $j$th cohomology of
\[ R\stackrel{\begin{pmatrix}\scriptstyle 1-x\\ \scriptstyle 1-y\end{pmatrix}}{\relbar\joinrel\longrightarrow}
R^2\stackrel{\setlength\arraycolsep{2pt}\begin{pmatrix}\scriptstyle 1+x&\scriptstyle -1-y\\ \scriptstyle 1+xy&\scriptstyle -1+x\end{pmatrix}}{\relbar\joinrel\relbar\joinrel\relbar\joinrel\relbar\joinrel\longrightarrow}
R^2\stackrel{\setlength\arraycolsep{2pt}\begin{pmatrix}\scriptstyle 1-x&\scriptstyle -1+xy \end{pmatrix}}{\relbar\joinrel\relbar\joinrel\relbar\joinrel\relbar\joinrel\longrightarrow}
R\stackrel{\sum_{g\in G}g}{\relbar\joinrel\longrightarrow}
R \stackrel{\begin{pmatrix}\scriptstyle 1-x\\ \scriptstyle 1-y\end{pmatrix}}{\relbar\joinrel\longrightarrow} \dots. \]
In particular, $\rH^2(G,k^\times)=0$ and
$\rH^3(G,k^\times)\cong \bZ/8\bZ$, so
$\delta_2$ is trivial and for $\delta_3$ there is just one nontrivial possibility.

We denote the toric divisors by $D_1$, $D_2$, $D_3$ (corresponding to the standard basis elements of $N$), $\widehat{D}_1$, $\widehat{D}_2$, $\widehat{D}_3$ (corresponding to their negatives), $D_{21}$, $D_{12}$, $D_{31}$, $D_{13}$, $D_{32}$, $D_{23}$ (corresponding to $e_2-e_1$, etc.),
$D_{123}$, $D_{132}$, $D_{231}$ (corresponding to $e_1+e_2-e_3$, etc.),
$\widehat{D}_{123}$, $\widehat{D}_{132}$, $\widehat{D}_{231}$ (corresponding to their negatives).
We have $\Pic(X)$ as quotient of the corresponding permutation module $P=\bZ^{18}$ by the relations in Table \ref{relPicX}.
A cocycle representative for the nonzero element of $\rH^1(G,\Pic(X))$ is
\[ (0,-[D_1]+[D_{31}]-[D_{13}]+[D_{32}]-[D_{123}]+[\widehat{D}_{123}]). \]
To the evident divisor lift we apply the differential of the complex that computes $\rH^*(G,P)$ and obtain the pair with first component $0$ and second component the sum of the left-hand sides of the first two entries in Table \ref{relPicX}.
Thus, under the connecting homomorphism to $\rH^2(G,M)$ the nonzero element of $\rH^1(G,\Pic(X))$ maps to the class represented by
\[ (0,e_1^\vee+e_2^\vee). \]
We lift to $(1,\alpha\beta)\in (k[T]^\times)^2$ and apply
$\setlength\arraycolsep{3pt}\begin{pmatrix}1-x&-1+xy\end{pmatrix}$ to get
$-1\in k^\times$.
So $\delta_3$ is nontrivial, and $\mathrm{Am}^3(X,G)\cong \bZ/2\bZ$.
\end{exam}

\begin{table}[H]
\small
\begin{align*}
&D_1-\widehat{D}_1-D_{21}+D_{12}-D_{31}+D_{13}
+D_{123}+D_{132}-D_{231}\\
&\qquad\qquad\qquad\qquad\qquad\qquad\qquad\qquad-\widehat{D}_{123}-\widehat{D}_{132}+\widehat{D}_{231}=0\\
&D_2-\widehat{D}_2+D_{21}-D_{12}-D_{32}+D_{23}
+D_{123}-D_{132}+D_{231}\\
&\qquad\qquad\qquad\qquad\qquad\qquad\qquad\qquad-\widehat{D}_{123}+\widehat{D}_{132}-\widehat{D}_{231}=0\\
&D_3-\widehat{D}_3+D_{31}-D_{13}+D_{32}-D_{23}
-D_{123}+D_{132}+D_{231}\\
&\qquad\qquad\qquad\qquad\qquad\qquad\qquad\qquad+\widehat{D}_{123}-\widehat{D}_{132}-\widehat{D}_{231}=0
\end{align*}
\caption{Relations in $\Pic(X)$}
\label{relPicX}
\end{table}

\section{Equivariant universal torsors}
\label{sect:torsors}
Let $X$ be a smooth projective rational variety
with action of $G$ and
$T_{\mathrm{NS}}$ the N\'eron-Severi torus, i.e., the $G$-torus with character lattice $\Pic(X)$.
We have the spectral sequence from Section \ref{sec.Gtori} and the exact sequence \eqref{eqn.LPic}, which includes the terms
\begin{equation}
\label{eqn.Thatdel}
\rH^1_G(X,T_{\mathrm{NS}})\to
\End_G(\Pic(X))\stackrel{\partial}\to
\rH^2(G,\Pic(X)^\vee\otimes k^\times).
\end{equation}
A \emph{$G$-equivariant universal torsor} is an equivariant $T_{\mathrm{NS}}$-torsor on $X$ whose class in $\rH^1_G(X,T_{\mathrm{NS}})$ maps to
$1_{\Pic(X)}\in \End_G(\Pic(X))$.
By \eqref{eqn.Thatdel}, 
there exists a $G$-equivariant universal torsor on $X$ if and only if 
\begin{equation}
\label{eqn.delIdzero}
\partial(1_{\Pic(X)}) = 0\in \rH^2(G,\Pic(X)^\vee\otimes k^\times). 
\end{equation}

For instance, by Remark \ref{rem.fixedpoint}, if $X$ has a fixed point then Condition \eqref{eqn.delIdzero} is satisfied.

\begin{prop}
\label{prop.delId}
Condition \eqref{eqn.delIdzero} is
a stable $G$-birational invariant.
Furthermore, if $X$ is $G$-unirational, then Condition \eqref{eqn.delIdzero} is satisfied.
\end{prop}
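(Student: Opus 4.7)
My plan is to derive both assertions from a single functoriality statement together with direct-sum analyses of the obstruction under products with projective spaces and equivariant blow-ups.

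\emph{Functoriality.} For any $G$-equivariant morphism $f\colon Y\to X$ of smooth projective $G$-varieties, I would first prove that vanishing of $\partial(1_{\Pic(Y)})$ implies vanishing of $\partial(1_{\Pic(X)})$. The tool is naturality of the exact sequence \eqref{eqn.LPic} in two separate variables: the $G$-variety (for fixed lattice $L$) and the lattice $L$ (for fixed variety). Naturality in the variety, with $L=\Pic(X)$, sends $1_{\Pic(X)}$ via $1\otimes f^*$ to the element $f^*\in(\Pic(X)^\vee\otimes\Pic(Y))^G=\Hom_G(\Pic(X),\Pic(Y))$, giving $\partial(1_{\Pic(X)})=\partial_Y(f^*)$ in $\rH^2(G,\Pic(X)^\vee\otimes k^\times)$. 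Naturality in $L$, applied to the $G$-lattice map $f^*\colon\Pic(X)\to\Pic(Y)$, sends $1_{\Pic(Y)}$ to the same $f^*$ on the source side, and induces on $\rH^2$ the dualization map $\rH^2(G,\Pic(Y)^\vee\otimes k^\times)\to\rH^2(G,\Pic(X)^\vee\otimes k^\times)$; it yields $\partial_Y(f^*)=(f^{*\vee})_*\partial(1_{\Pic(Y)})$. Chaining the two relations gives $\partial(1_{\Pic(X)})=(f^{*\vee})_*\partial(1_{\Pic(Y)})$, from which the implication follows.

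\emph{Direct-sum decompositions.} For $X'=X\times\bP^n$ with trivial $G$-action on the second factor, or $X'=\tilde X$ the equivariant blow-up of $X$ along a smooth $G$-invariant center, there is a canonical $G$-module splitting $\Pic(X')=\Pic(X)\oplus Q$, with $Q=\bZ$ (trivial action) in the first case and $Q$ the permutation module on components of the exceptional divisor in the second. The $G$-torus $T_{\mathrm{NS}}(X')$ splits accordingly as a product of $G$-tori with character lattices $\Pic(X)$ and $Q$, so \eqref{eqn.LPic} becomes an additive direct sum and the obstruction splits as $\partial(1_{\Pic(X')})=\partial(1_{\Pic(X)})\oplus\partial(1_Q)$ in $\rH^2(G,\Pic(X)^\vee\otimes k^\times)\oplus\rH^2(G,Q^\vee\otimes k^\times)$. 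The $Q$-summand vanishes in both cases: $\bA^{n+1}\setminus\{0\}$ with trivial $G$-action is an equivariant universal torsor for $\bP^n$, and the $G$-invariant collection of line bundles associated with the exceptional divisor components assembles into a $G$-equivariant torsor representing $1_Q$. Together with the equivariant weak factorization theorem for $G$-birational maps of smooth projective $G$-varieties, this yields the stable $G$-birational invariance of Condition \eqref{eqn.delIdzero}.

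\emph{Unirationality.} For the second assertion, start from a dominant equivariant rational map $\bP(V^\vee)\dashrightarrow X$ and apply equivariant resolution of indeterminacies to obtain a smooth projective $G$-variety $\tilde Y$ with an equivariant sequence of blow-ups $\tilde Y\to\bP(V^\vee)$ and a $G$-equivariant morphism $\tilde Y\to X$. The linear $G$-action on $V^\vee$ makes $V^\vee\setminus\{0\}\to\bP(V^\vee)$ into an equivariant universal torsor, so Condition \eqref{eqn.delIdzero} holds on $\bP(V^\vee)$. By the blow-up invariance from the second step, it holds on $\tilde Y$; by the functoriality from the first step, it holds on $X$.

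The most delicate step is the double-naturality bookkeeping in the functoriality, verifying that $1_{\Pic(X)}$ and $1_{\Pic(Y)}$ are both identified with $f^*$ under their respective naturalities so that their obstruction classes become linked through a common middle term in $\rH^2(G,\Pic(X)^\vee\otimes k^\times)$. The remaining ingredients—the direct-sum splittings, the existence of the equivariant universal torsors for $\bP^n$ and for the exceptional divisors, equivariant weak factorization, and equivariant resolution of indeterminacies—are standard.
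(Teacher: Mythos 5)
Your proposal is correct and follows essentially the same route as the paper: the same double-naturality of \eqref{eqn.LPic} (in the variety and in the lattice, linking $1_{\Pic(X)}$ and $1_{\Pic(Y)}$ through $f^*\in\Hom_G(\Pic(X),\Pic(Y))$), the same direct-sum splitting of the obstruction under blow-ups and products with the exceptional-divisor and tautological torsors killing the extra summand, and the same reduction of the unirationality claim via equivariant resolution of indeterminacies combined with birational invariance and functoriality.
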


\begin{proof}
The proof uses the functoriality of the exact sequence \eqref{eqn.LPic}, in the character lattice as well as in $X$; cf.\ \cite[Prop.\ 1.5.2 (i)]{CTSduke}.

For $G$-birational invariance it suffices, by equivariant weak factorization (cf.\ \cite{BnG}), to establish invariance under blow-up
$\pi\colon \widetilde{X}\to X$ along a smooth $G$-invariant center.
Let us denote by $E_1$, $\dots$, $E_r$ the components of the exceptional divisor.
We have $\Pic(\widetilde{X})=\Pic(X)\oplus W$,
with $W=\bZ\cdot [E_1]\oplus \dots\oplus \bZ\cdot [E_r]$ and permutation action on $W$.
By functoriality we have a morphism from the exact sequence \eqref{eqn.Thatdel} to the sequence \eqref{eqn.LPic} for $\Pic(X)$ on $\widetilde{X}$;
denoting by $\partial'$ the $E_2$-differential appearing in the latter, we obtain
\[ \partial'(\pi^*)=\partial(1_{\Pic(X)}). \]
In particular, under the decomposition
\[ \rH^2(G,\Pic(\widetilde{X})^\vee\otimes k^\times)=\rH^2(G,\Pic(X)^\vee\otimes k^\times)\oplus \rH^2(G,W^\vee\otimes k^\times), \]
the class $\partial(1_{\Pic(\widetilde{X})})$ has first component $\partial(1_{\Pic(X)})$.
A torsor, consisting of
nonvanishing sections
of the line bundles $\cO_{\widetilde{X}}(E_i)$ for $i=1$, $\dots$, $r$, exhibits the vanishing of the second component of $\partial(1_{\Pic(\widetilde{X})})$ and thus the $G$-birational invariance of
Condition \eqref{eqn.delIdzero}.
To strengthen this to stable birational invariance we use
a similar combination of functoriality and the evident $\bG_m$-torsor on projective space.

For the second assertion, by $G$-birational invariance we are reduced to showing, for an equivariant morphism $\pi\colon Y\to X$, that Condition \eqref{eqn.delIdzero} for $Y$ implies the same for $X$.
We have $\pi^*\colon \Pic(X)\to \Pic(Y)$, inducing the downward maps in the commutative diagram
\[
\xymatrix{
\End_G(\Pic(Y)) \ar[r] \ar[d] &
\rH^2(G,\Pic(Y)^\vee\otimes k^\times) \ar[d] \\
\Hom_G(\Pic(X),\Pic(Y))\ar[r] &
\rH^2(G,\Pic(X)^\vee\otimes k^\times) \\
\End_G(\Pic(X))\ar[u]\ar[r] & \rH^2(G,\Pic(X)^\vee\otimes k^\times)\ar@{=}[u]
}
\]
A straightforward diagram chase gives the implication.
\end{proof}

\begin{rema}
\label{rem.product}
Let $X_1$ and $X_2$ be smooth projective rational $G$-varieties.
Then Condition \eqref{eqn.delIdzero} holds for $X_1\times X_2$, if and only if it holds for $X_1$ and $X_2$ individually.
By functoriality, as in the proof of Proposition \ref{prop.delId}, under the decomposition
$$
\rH^2(G,\Pic(X_1\times X_2)^\vee\otimes k^\times)= \rH^2(G,\Pic(X_1)^\vee\otimes k^\times)\oplus  \rH^2(G,\Pic(X_2)^\vee\otimes k^\times)
$$
we have 
$$
\partial(1_{\Pic(X_1\times X_2)})=
(\partial(1_{\Pic(X_1)}),\partial(1_{\Pic(X_2)})).
$$
\end{rema}

\begin{rema}
\label{rem.sequencesplits}
Proposition 4 of \cite{HT-torsor}, used there only in the proof of $G$-birational invariance (Proposition \ref{prop.delId}, here), is incorrect as stated.
We consider $X=\bP^1$, with $U=\bA^1\setminus \{0\}$ and $G$ of order $2$, acting by $x\mapsto -x$. 
Now $T_{\mathrm{NS}}=\bG_m$, and
$$
\rH^2(G,\bG_m)=0.
$$
There exists a $G$-equivariant universal torsor over $X$, by \eqref{eqn.Thatdel} or equally well by the fact that $X$ has fixed points.
However, the sequence 
$$
1\to k^\times \to k[U]^\times \to k[U]^\times/k^\times \to 1
$$
does \emph{not} admit a $G$-equivariant splitting.
Indeed, writing $\alpha x^j$ as $(\alpha,j)$ to get $k[U]^\times\cong k^\times\times\bZ$,
the action becomes
$$
(\alpha,j)\mapsto ((-1)^j\alpha, j),
$$
with map to $k[U]^\times/k^\times\simeq \bZ$ given by projection to the second factor, and
there is no invariant lift of $1\in \bZ$.
Similarly, we may see that the sequence
\[ 1\to k^\times\to k(X)^\times\to k(X)^\times/k^\times\to 1, \]
analogous to the one used in \cite[\S 2.2]{CTSduke} for a Galois action to define the elementary obstruction, does not admit a $G$-equivariant splitting.
%
\end{rema}

The class
\begin{equation}
\label{eqn.wanttocompute}
\partial(1_{\Pic(X)})\in \rH^2(G,\Pic(X)^\vee\otimes k^\times)
\end{equation}
is the $G$-equivariant analog of the universal obstruction class $\partial(\lambda_0)$, considered in the context of geometry over nonclosed fields in \cite[Prop.\ 2.2.8 (i)--(iii)]{CTSduke}.
However, in the equivariant context, it is {\em not} equivalent to the elementary obstruction, as shown in Remark \ref{rem.sequencesplits}.

Now we explain how to compute the class \eqref{eqn.wanttocompute}
in concrete geometric situations. Let $U\subset X$ be a Zariski open subset, whose boundary is a $G$-stable union of irreducible divisors $D=\bigcup_{i\in I} D_i$, generating $\Pic(X)$.  
We let $P=\bigoplus_i \bZ\cdot D_i$ denote the corresponding permutation module and $M$ the module of relations, so that we have an exact sequence \eqref{eqn.MPPic}.
Dualizing and tensoring with $k^\times$ we obtain
\begin{equation}
\label{eqn.MPPicdual}
0\to \Pic(X)^\vee\otimes k^\times\to P^\vee\otimes k^\times\to M^\vee\otimes k^\times\to 0,
\end{equation}
which is exact since $\Pic(X)$ is torsion-free.

We also have the exact sequence
\begin{equation}
\label{eqn.kkUM}
1\to k^\times \to k[U]^\times \to M\to 0.
\end{equation}
The extension class
\[
\rho\in \rH^1(G,M^\vee\otimes k^\times)
\]
of \eqref{eqn.kkUM}
may, following Lemma \ref{lem.ABCidentity}, be computed by applying the differential of a complex that computes group cohomology to $M^\vee\otimes k[U]^\times$ to get a $1$-cocycle with values in $M^\vee\otimes k^\times$,
that represents $-\rho$;
for computation with explicit cocycles one should be aware of the signs from \S \ref{ss.groupcohomology}.
By an analogous computation, from a $1$-cocycle representative of $\rho$ we obtain a $2$-cocycle representative of the image
\[ \sigma\in \rH^2(G,\Pic(X)^\vee\otimes k^\times). \]
under the connecting homomorphism of group cohomology of \eqref{eqn.MPPicdual}.

\begin{prop}
\label{prop.delidentity}
With the above notation we have
\[ \partial(1_{\mathrm{\Pic(X)}})=-\sigma. \]
\end{prop}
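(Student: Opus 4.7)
The plan is to compute $\partial(1_{\Pic(X)})$ via a $4$-term exact sequence of $G$-modules arising from a sheaf-theoretic short exact sequence on $[X/G]$, and then to compare the result with the Yoneda splice representing $\sigma$, tracking signs via the conventions spelled out in the paragraph preceding the statement.

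First, let $j\colon U\hookrightarrow X$ and $\iota_i\colon D_i\hookrightarrow X$ denote the inclusions, and tensor the divisor short exact sequence
\[ 0\to \G_m\to j_*\G_{m,U}\to \bigoplus_i (\iota_i)_*\bZ\to 0 \]
of sheaves on the \'etale site of $[X/G]$ with the torsion-free $G$-module $\Pic(X)^\vee$; this yields a short exact sequence with kernel $T_{\mathrm{NS}}$. Since the $D_i$ generate $\Pic(X)$ one has $\Pic(U)=0$, and $\rH^1(D_i,\bZ)=0$ for $D_i$ smooth projective over the algebraically closed $k$. Hence the Leray spectral sequence for $\pi\colon [X/G]\to BG$ identifies $\rH^i_G$ of the middle and right sheaves, in degrees $\le 1$, with $\rH^i(G,\Pic(X)^\vee\otimes k[U]^\times)$ and $\rH^i(G,\Pic(X)^\vee\otimes P)$, respectively. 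Together with $\rH^1(X,T_{\mathrm{NS}})=\End(\Pic(X))$, the global-sections long exact sequence collapses to a $4$-term exact sequence of $G$-modules
\[ 0\to \Pic(X)^\vee\otimes k^\times\to \Pic(X)^\vee\otimes k[U]^\times\to \Pic(X)^\vee\otimes P\to \End(\Pic(X))\to 0, \]
and a direct comparison of cochain recipes shows that $\partial$ from \eqref{eqn.LPic} coincides with the connecting homomorphism $\End_G(\Pic(X))\to \rH^2(G,\Pic(X)^\vee\otimes k^\times)$ of this $4$-term sequence.

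Second, this $4$-term sequence is exactly what one obtains by tensoring the Yoneda splice
\[ 0\to k^\times\to k[U]^\times\to P\to \Pic(X)\to 0 \]
of \eqref{eqn.kkUM} and \eqref{eqn.MPPic} with the torsion-free module $\Pic(X)^\vee$. The spliced $2$-extension represents $\sigma$ in $\Ext^2_G(\Pic(X),k^\times)\cong\rH^2(G,\Pic(X)^\vee\otimes k^\times)$, by the very construction of the connecting homomorphism of \eqref{eqn.MPPicdual} sending $\rho$ to $\sigma$. Evaluating the connecting homomorphism of the tensored $4$-term sequence at $1_{\Pic(X)}$, viewed as a $G$-equivariant map $\bZ\to\Pic(X)^\vee\otimes\Pic(X)$, pulls this Yoneda class back to $\sigma$ under the Hom-tensor isomorphism, up to a conventional sign.

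Finally, the sign is fixed as follows. By the recipe recalled before the statement and by Lemma~\ref{lem.ABCidentity}, the differential-of-cochain procedure used to compute $\rho$ produces a representative of $-\rho$ rather than $\rho$; this sign propagates through the iterated connecting homomorphism, yielding $\partial(1_{\Pic(X)})=-\sigma$. The principal obstacle is this sign-bookkeeping together with the precise identification of the Leray differential $\partial$ with the $4$-term connecting homomorphism; both are routine applications of the Appendix results, but they require careful attention.
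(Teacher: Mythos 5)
Your overall skeleton matches the paper's: you reduce $\partial$ to the iterated connecting homomorphism of the two short exact sequences obtained by tensoring \eqref{eqn.kkUM} and \eqref{eqn.MPPic} with $\Pic(X)^\vee$ (your sheaf-theoretic derivation of the four-term sequence is a reasonable substitute for the paper's appeal to Lemma \ref{lem.twoterm}, and is arguably more explicit about why the Leray differential is computed by that complex), and then you evaluate at $1_{\Pic(X)}$. Up to this point the argument is sound.

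The gap is in the sign determination, which is the entire content of the proposition. You attribute the minus sign to the fact that the cochain recipe for $\rho$ produces a representative of $-\rho$; but that sign is internal to the \emph{computation} of $\rho$ and is already absorbed into its definition (one computes the cocycle and negates), so it contributes nothing to the intrinsic identity $\partial(1_{\Pic(X)})=-\sigma$. The actual sources of the sign are two separate facts from the Appendix that your argument never deploys: first, Lemma \ref{lem.ABCidentity} applied to the tensored sequence \eqref{eqn.MPPic} gives that the first connecting homomorphism sends $1_{\Pic(X)}$ to \emph{minus} the extension class in $\rH^1(G,\Pic(X)^\vee\otimes M)$; second, Lemma \ref{lem.ABC2} (the anticommuting square of connecting homomorphisms) is needed to compare the coefficient-variable connecting homomorphism appearing in the composite computing $\partial$ with the first-variable connecting homomorphism that defines $\sigma=\delta(\rho)$ --- these two routes to $\rH^2(G,\Pic(X)^\vee\otimes k^\times)$ differ by a sign, and that is precisely where the comparison happens. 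Your intermediate assertions that the Yoneda splice ``represents $\sigma$'' and that evaluation at $1_{\Pic(X)}$ ``pulls this back to $\sigma$ up to a conventional sign'' are each themselves sign-sensitive claims left unverified, so the final equality, sign included, is not established.
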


\begin{proof}
Tensoring the previous exact sequences by $\Pic(X)^\vee$, we have
\begin{gather*}
0\to \Pic(X)^\vee\otimes  M\to \Pic(X)^\vee\otimes P \to \End(\Pic(X))\to 0, \\
0\to \Pic(X)^\vee\otimes k^\times \to \Pic(X)^\vee\otimes k[U]^\times \to \Pic(X)^\vee\otimes M\to 0.
\end{gather*}
The exact sequences combine to
\[
\xymatrix@C=12pt@R=15pt{
&&
\rH^1(G, \Pic(X)^\vee\otimes k[U]^\times)\ar[d] & \\
(\Pic(X)^\vee\otimes P)^G \ar[r] &
\End_G(\Pic(X)) \ar[r]\ar[dr]_{\partial} &
\rH^1(G,\Pic(X)^\vee\otimes M) \ar[d] \\
&&\rH^2(G,\Pic(X)^\vee\otimes k^\times)&
}
\]
where the triangle commutes, by Lemma \ref{lem.twoterm}.
By Lemma \ref{lem.ABC2}, we have the anticommuting square
\[
\xymatrix@C=18pt@R=15pt{
\End_G(\Pic(X))\ar[dr]&\End_G(M) \ar[r] \ar[d] & \rH^1(G,M^\vee\otimes k^\times) \ar[d] \\
& \rH^1(G,\Pic(X)^\vee\otimes M) \ar[r] & \rH^2(G,\Pic(X)^\vee\otimes k^\times)
}
\]
in the diagram of connecting homomorphisms of group cohomology, and by Lemma \ref{lem.ABCidentity}, the images of $1_{\Pic(X)}$ and $1_M$ are inverse to each other.
A diagram chase, with attention to signs, yields the result.
\end{proof}

\begin{rema} 
If the $G$-action on $\Pic(X)$ is trivial, then $\partial(1_{\Pic(X)})=0$ if and only if $\mathrm{Am}^2(X,G)=0$.
We choose a basis $\Pic(X)\cong \bZ^r$ and compare
\eqref{eqn.Thatdel}
with \eqref{eqn:BrXG} via
the $r$ projections
$\G_m^r\to \G_m$.
Then $\delta_2$ of the basis gives
$\partial(1_{\Pic(X)})\in \rH^2(G,\Pic(X)^\vee\otimes k^\times)\cong \rH^2(G,k^\times)^r$.
\end{rema}

\section{Unirationality of actions on toric varieties}
\label{sect:main}
We apply the formalism of Section~\ref{sect:torsors} to establish a criterion for $G$-unirationality of actions on toric varieties. 

We return to the setup of Section~\ref{sect:toric}: let  $T\simeq \bG_m^d$ be an algebraic torus, with character lattice $M:=\mathfrak{X}^*(T)$. From Section~\ref{sec.Gtori}, automorphisms of $T$ as an algebraic torus (i.e., fixing $1\in T$) are identified with $\GL(M)$.
Automorphisms of $T$ as an algebraic variety admit a description as semidirect product:
\begin{equation}
\label{eqn.semidirect}
1\to T(k)\to \Aut(T)\to \GL(M)\to 1. 
\end{equation}

Let $X$ be a smooth projective toric variety, which is an equivariant compactification of $T$.
Then $X$ arises from a smooth projective $G$-invariant fan in 
$N_{\mathbb{R}}$, where $N=\Hom(M,\bZ)$ is
the cocharacter lattice; see \cite{CTHS} for the existence of such a fan.
The $1$-dimensional cones of the fan give a permutation module $P$; they correspond to the toric divisors. 

We recall the map
\[
\partial\colon \End_G(\Pic(X))\to
\rH^2(G,\Pic(X)^\vee\otimes k^\times)
\]
from \eqref{eqn.Thatdel}
and Condition \eqref{eqn.delIdzero}, vanishing of $\partial(1_{\Pic(X)})$, which generally for an action of a finite group on a smooth projective rational variety is necessary for $G$-unirationality (Proposition \ref{prop.delId}).

Our main theorem establishes the sufficiency, for $X$ as considered here.

\begin{theo}
\label{thm:main}
Let $X$ be a smooth projective toric variety with a regular action by a finite group $G$ such that the torus $T\subset X$ is $G$-stable.
Then $X$ is $G$-unirational if and only if
$\partial(1_{\Pic(X)})=0$.
\end{theo}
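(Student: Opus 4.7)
The plan is to prove the ``if'' direction (the converse being Proposition~\ref{prop.delId}) by constructing, under the hypothesis $\partial(1_{\Pic(X)})=0$, a dominant $G$-equivariant rational map to $X$ from a linear $G$-representation. The guiding idea is that the Cox space $V=\bA^{|I|}$ (with $I$ the set of toric divisors) carries a natural permutation action of $G$ making it a $G$-representation, but this canonical action descends via the Cox quotient $V\dashrightarrow X$ only to the ``linearized'' (fixed-point-containing) $G$-action on $T\subset X$, which in general differs from the given action by a translation $1$-cocycle. Vanishing of $\partial(1_{\Pic(X)})$ will turn out to be exactly the condition allowing one to twist the canonical action on $V$ by a cocycle valued in the Cox torus $T_P:=P^\vee\otimes\G_m$, absorbing the translation and matching the given $G$-action on $X$.

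First I would decompose the given $G$-action on $T$, via \eqref{eqn.semidirect}, as $(\phi,\alpha)$ with $\phi\colon G\to\GL(M)$ linear and $\alpha\in Z^1(G,T(k))$ a translation cocycle for $\phi^\vee$; the case $[\alpha]=0$ corresponds to a toric action with a $T$-fixed point, for which $G$-unirationality is \cite[\S 3.6, \S 4.2]{HT-torsor}. In general, taking $U=T$ in Proposition~\ref{prop.delidentity}, the extension class $\rho\in\rH^1(G,M^\vee\otimes k^\times)$ of~\eqref{eqn.kkUM} equals $[\alpha]$ up to sign (the obstruction to $G$-equivariance of the section $m\mapsto\chi_m$ of $k[T]^\times\to M$ is computed directly to be $\alpha$), and $\partial(1_{\Pic(X)})=0$ amounts to $[\alpha]$ lifting to $\rH^1(G,T_P(k))$ along the map induced by $M\hookrightarrow P$. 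After conjugating the given $G$-action on $X$ by a translation $\tau_\beta$ for some $\beta\in T(k)$ (which extends from $T$ to $X$ and gives a $G$-equivariantly isomorphic action), I reduce to the case where $\alpha$ itself lifts to a cocycle $\tilde\alpha\in Z^1(G,T_P(k))$.

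With $\tilde\alpha$ in hand, I define the twisted action on $V$ by $g\cdot v:=\tilde\alpha(g)\cdot g_0(v)$, where $g_0$ is the canonical permutation action and $\tilde\alpha(g)\in T_P(k)$ acts via the tautological $T_P$-action on $\bA^{|I|}$. The cocycle property of $\tilde\alpha$ makes this a group action, and since it is given by monomial matrices in the coordinates $x_i$, it is linear, so $V$ with this action is a $G$-representation. Because \eqref{eqn.MPPic} is $G$-equivariant, $T_{\mathrm{NS}}\subset T_P$ is $G$-stable, and being central in $T_P$ it is normalized by the twisted action, so the Cox quotient $V\dashrightarrow X$ descends the twisted action to a $G$-action on $X$. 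A direct computation on the open dense $T_P\subset V$ mapping to $T\subset X$ shows that the descended action has translation cocycle equal to the image of $\tilde\alpha$ in $T(k)$, namely $\alpha$, so it agrees with the given $G$-action on a dense open of $X$ and hence (by Zariski density for regular actions on a separated scheme) on all of $X$. A $G$-equivariant open embedding of $V$ into the projective space $\bP(V\oplus k)$ with trivial action on the extra line then provides the dominant $G$-equivariant rational map from projective space, as required for $G$-unirationality.

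The step I expect to be the main technical obstacle is the cocycle-level identification of $\rho$ with $[\alpha]$ (up to sign) in the second paragraph: unpacking Proposition~\ref{prop.delidentity} for $U=T$, reconciling the right-action convention of the paper with the semidirect-product formula for $\alpha$, and tracking the signs coming from the Appendix lemmas used in the proof of that proposition requires careful bookkeeping. The remaining check, that the twisted action normalizes $T_{\mathrm{NS}}$ and hence the Cox quotient is $G$-equivariant, is formal but relies on centrality of $T_{\mathrm{NS}}$ in $T_P$ together with the $G$-equivariance of~\eqref{eqn.MPPic}.
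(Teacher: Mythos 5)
Your proposal is correct and follows essentially the same route as the paper: identify the given action with a translation cocycle $(\lambda_g)_g$ modifying the underlying toric action, show via Proposition~\ref{prop.delidentity} that $\partial(1_{\Pic(X)})=0$ is exactly the condition for this class to lift along $P^\vee\otimes k^\times\to M^\vee\otimes k^\times$, and use the lift to twist the permutation action on the Cox torus into a linear (monomial) action dominating $X$. The only cosmetic difference is that you package the dominant map through the Cox quotient $\bA^{|I|}\dashrightarrow X$, whereas the paper simply restricts to the torus $\Spec(k[P])$ and composes the surjection $\Spec(k[P])\to T$ with $T\subset X$, which spares the (easy) verification that the twisted action normalizes $T_{\mathrm{NS}}$.
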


Before we embark on the proof we recall a cohomological characterization of an action as in Theorem \ref{thm:main}.

Let a homomorphism $G\to \mathsf{GL}(M)$ be given,
determining a toric action of $G$ on $X$:
\[ (a_g)_{g\in G},\qquad a_g\in \Aut(X),\qquad a_{g'}\circ a_g=a_{gg'},\,\,\forall\,g,g'\in G. \]
In the setting of Theorem \ref{thm:main} this arises from the given action by application of the right-hand map in \eqref{eqn.semidirect}.
We characterize the actions of $G$ on $X$, leaving $T$ stable, that lift the toric action $(a_g)_g$.
Such an action is determined by
$(\lambda_g)_g$ with $\lambda_g\in T(k)$ for $g\in G$,
where we let 
\[ \tau_g\in \Aut(X) \]
denote translation by $\lambda_g$
and associate to $(a_g)_g$ and $(\lambda_g)_g$ the collection of automorphisms
$(a_g\circ \tau_g)_g$.
The condition for $(a_g\circ \tau_g)_g$ to be an action is
\begin{equation}
\label{eqn.withtranslation}
a_{g'}\circ \tau_{g'}\circ a_g\circ \tau_g=a_{gg'}\circ \tau_{gg'}, \quad \,\,\forall\,g,g'\in G.
\end{equation}
As may be checked directly, \eqref{eqn.withtranslation}
is equivalent to
$\lambda_g a_g^{-1}(\lambda_{g'})=\lambda_{gg'}$, which is the cocycle condition for $(\lambda_g)_g$, where $T(k)$ is endowed with the $G$-module structure $g\cdot \zeta=a_g^{-1}(\zeta)$.
We regard two such modified actions as equivalent, if one transforms to the other under the identification of $X$ with itself by translation by some $\zeta\in T(k)$.
The effect on $a_g\circ \tau_g$ is to compose with translation by $a_g^{-1}(\zeta)$, i.e., change $(\lambda_g)_g$ by a coboundary.
So the equivalence classes of actions, lifting the toric action given by $(a_g)_g$, are classified by
\[ \rH^1(G,M^\vee\otimes k^\times). \]

In the setting of Theorem \ref{thm:main}, we let the given action correspond to $(a_g)_g$ and $(\lambda_g)_g$ as above, with
$[(\lambda_g)_g]\in \rH^1(G,M^\vee\otimes k^\times)$.
We look at the two basic exact sequences from Section \ref{sect:toric}.
The sequence \eqref{eqn.MPPic} is sensitive only to the toric action and does not see the effect of the translations.
The other sequence \eqref{eqn.kkTM}, we claim, has the extension class
\begin{equation}
\label{eqn.explicitrho}
\rho=[(\lambda_g)_g]\in \rH^1(G,M^\vee\otimes k^\times).
\end{equation}
To see this, we apply the recipe from Section \ref{sect:torsors}.
The action of $g\in G$ sends $(\chi\mapsto \chi)\in M^\vee\otimes k[T]^\times$ to
$(\chi\mapsto \lambda_g(\chi)\chi)$, so we get the cocycle representative $(\lambda_g^{-1})_g$ of $-\rho$.

\begin{proof}[Proof of Theorem \ref{thm:main}]
Proposition \ref{prop.delId} supplies the forwards implication.
For the reverse implication we let $\sigma\in \rH^2(G,\Pic(X)^\vee\otimes k^\times)$ be obtained from $\rho$, above, by the connecting homomorphism of the exact sequence \eqref{eqn.MPPicdual}.
By Proposition \ref{prop.delidentity},
we have $\partial(1_{\Pic(X)})=-\sigma$.
If this vanishes, then $\rho$ admits a lift to a class in $\rH^1(G,P^\vee\otimes k^\times)$.
By an argument similar to the one above, a cocycle representative determines a modification by translations of the right permutation action on the torus $\Spec(k[P])$.
This is a linear action, so $X$ is $G$-unirational.
\end{proof}

\begin{rema}
\label{rema.aboutU11withsigns}
Example~\ref{exa.U11withsigns} is the unique (up to equivalence) nontrivial modification by translations of the stated toric action of $G=\mathfrak{Q}_8$.
Indeed, $\rH^1(G,M^\vee\otimes k^\times)\cong\bZ/2\bZ$:
the complex for group cohomology of $G$ yields
the quotient by
all $(\alpha^2\beta\gamma,\beta\gamma^{-1},\beta^{-1}\gamma,\alpha\gamma^{-1},\alpha\beta^2\gamma,\alpha^{-1}\gamma)$
of the
$(\alpha_1,\beta_1,\gamma_1,\alpha_2,\beta_2,\gamma_2)\in T(k)^2$ with
$\beta_1\gamma_1\alpha_2\gamma_2=1$,
$\beta_1\gamma_1\alpha_2^{-1}\gamma_2^{-1}=1$,
$\alpha_1\beta_1\beta_2^{-1}\gamma_2=1$.
The first two conditions lead to
$\beta_1\gamma_1=\alpha_2\gamma_2\in \mu_2$,
and the remaining one gives
$\beta_2=\alpha_1\beta_1\gamma_2$.
Given $\alpha_1$, $\beta_1$, $\gamma_2\in k^\times$, we take $\beta$ to be a $4$th root of $\alpha_1\beta_1^3\gamma_2^2$,
and with $\alpha=\beta_1^{-1}\gamma_2^{-1}\beta$, $\gamma=\beta_1^{-1}\beta$, we have
\[
(\alpha^2\beta\gamma,\beta\gamma^{-1},\beta^{-1}\gamma,\alpha\gamma^{-1},\alpha\beta^2\gamma,\alpha^{-1}\gamma)=(\alpha_1,\beta_1,\beta_1^{-1},\gamma_2^{-1},\alpha_1\beta_1\gamma_2,\gamma_2).
\]
\end{rema}

\begin{exam}
\label{exa.noAm3}
Practical implemention of the criterion from Theorem \ref{thm:main} quickly leads to large computations.
To give the reader a flavor, we sketch a $5$-dimensional example for the dihedral group $G:=\mathfrak{D}_4$.
For this group Condition \textbf{(A)} is not automatic
($G$ has non-cyclic abelian subgroups),
but does imply $\mathrm{Am}^j(X,G)=0$ for $j=2$, $3$, since
restriction to the three subgroups of index $2$ induces injective
\[ \rH^j(G,k^\times)\to \rH^j(C_4,k^\times)\oplus \rH^j(C_2^2,k^\times)\oplus \rH^j(C_2^2,k^\times). \]
Let $X$ be an equivariant compactification of
$T=\G_m^5$ with generators $x$ of order $4$ and $y$ of order $2$ acting by
\[ (\alpha,\beta,\gamma,\delta,\varepsilon)\mapsto \bigl(\alpha\beta,\frac{1}{\beta},\frac{\delta}{\gamma},-\delta,i\varepsilon\bigr),\quad
(\alpha,\beta,\gamma,\delta,\varepsilon)\mapsto \bigl(\alpha,\beta,\frac{\beta}{\gamma},-\frac{1}{\delta},-i\delta\varepsilon\bigr),
\]
where the underlying toric action has been taken from \cite[Lemma 6.2]{hellerreinerI}.
For group cohomology $\rH^j(G,R)$ we use the complex ($N_x:=1+x+x^2+x^3$)
\[ R\stackrel{\begin{pmatrix}\scriptstyle 1-x\\ \scriptstyle 1-y\end{pmatrix}}\longrightarrow
R^2\stackrel{\setlength\arraycolsep{2pt}\begin{pmatrix}\scriptstyle N_x&\scriptstyle 0\\ \scriptstyle 1+xy&\scriptstyle -1+x\\ \scriptstyle 0&\scriptstyle 1+y\end{pmatrix}}{\relbar\joinrel\relbar\joinrel\relbar\joinrel\longrightarrow}
R^3\stackrel{\setlength\arraycolsep{2pt}\begin{pmatrix}\scriptstyle 1-x&\scriptstyle 0&\scriptstyle0\\ \scriptstyle 1+y&\scriptstyle -N_x&\scriptstyle0\\ \scriptstyle0&\scriptstyle 1-xy&\scriptstyle 1-x\\ \scriptstyle0&\scriptstyle0&\scriptstyle 1-y\end{pmatrix}}{\relbar\joinrel\relbar\joinrel\relbar\joinrel\relbar\joinrel\relbar\joinrel\longrightarrow}
R^4\dots,
\]
cf.\ \cite[\S IV.2]{ademmilgram}.
A compactification, equivariant and with evident fixed points for the subgroup $\langle x^2,y\rangle$, is the copy of $(\bP^1)^5$ with coordinate charts $k[\alpha^{\pm 1},(\beta\gamma^{-1})^{\pm 1},\gamma^{\pm 1},(\delta\varepsilon)^{\pm 1},\varepsilon^{\pm 1}]$,
but to be $G$-invariant the set of rays needs to be augmented as in the top $5$ lines of Table \ref{rays}.
We add more rays (rest of Table \ref{rays}) to get a nonsingular projective equivariant compactification $X$, given by the cones of Table \ref{cones}, where just one cone is listed from each orbit under $G\times \{\pm 1\}$ (action of $G$ and multiplication by $\pm 1$ on $N_{\bR}$).
The other non-cyclic abelian subgroup $\langle x^2,xy\rangle$ fixes points in the chart with coordinates $\alpha$, $\beta$, $\gamma^{-1}$, $\gamma^{-1}\delta$, $\gamma\varepsilon$,
corresponding to the fourth cone listed in Table \ref{cones},
so the action satisfies Condition \textbf{(A)}.
We determine $\rH^1(G,\Pic(X))\cong \bZ/2\bZ$.

The computation of $\partial(1_{\Pic(X)})$, via Proposition \ref{prop.delidentity}, is done using isomorphisms for finitely generated torsion-free $\Lambda$ (cf.\ \cite[\S 2.1]{KT-dp})
\[ \rH^j(G,\Lambda\otimes k^\times)\cong \rH^j(G,\Lambda\otimes \bQ/\bZ)\eqto \rH^{j+1}(G,\Lambda)\qquad (j\ge 1), \]
with up-to-sign compatibility of connecting homomorphisms by Lemma \ref{lem.ABC2}:
\eqref{eqn.explicitrho} yields
$
((0,0,1,2,1),(0,0,1,0,1),(0,0,0,0,1))\in
(M^\vee)^3$,
which by the connecting homomorphism goes
to the class
of the element of $(\Pic(X)^\vee)^4$ that maps to
\[ (2e_7^\vee-2e_9^\vee+e_{19}^\vee-e_{20}^\vee,2e_7^\vee+2e_{15}^\vee-e_{19}^\vee-e_{20}^\vee-2e_{21}^\vee,0,0)\in (P^\vee)^4, \]
with respect to the dual basis to the basis $e_j$, $j=1$, $\dots$, $42$, of $P$, corresponding to the numbering of the rays (Table \ref{rays}).
The nontriviality of the class in $\rH^3(G,\Pic(X)^\vee)$ is confirmed by direct computation; the class is $2$-torsion:
\begin{align}
\begin{split}
\label{eqn.twotorsion}
\begin{pmatrix}1-x\\1+y\end{pmatrix}
(3&e_7^\vee-e_9^\vee+e_{15}^\vee+e_{17}^\vee-2e_{20}^\vee-2e_{21}^\vee)=\\
&\qquad\qquad\begin{pmatrix}4e_7^\vee-4e_9^\vee+2e_{19}^\vee-2e_{20}^\vee\\4e_7^\vee+4e_{15}^\vee-2e_{19}^\vee-2e_{20}^\vee-4e_{21}^\vee\end{pmatrix}.
\end{split}
\end{align}
Theorem \ref{thm:main} excludes $G$-unirationality.
\end{exam}

\begin{table}[H]
\small
\[ \setlength{\arraycolsep}{2pt}
\begin{array}{llll}
&\,\,\,1(1\,0\,0\,0\,0)&\,\,\,2({-1}\,0\,0\,0\,0)\\
\,\,\,3(0\,1\,0\,0\,0)&\,\,\,4(0\,{-1}\,0\,0\,0)&\,\,\,5(1\,{-1}\,0\,0\,0)&\,\,\,6({-1}\,1\,0\,0\,0)\\
\,\,\,7(0\,0\,0\,1\,0)&\,\,\,8(0\,0\,0\,{-1}\,0)&\,\,\,9(0\,0\,1\,1\,0)&10(0\,0\,{-1}\,{-1}\,0)\\
11(0\,1\,1\,0\,0)&12(0\,{-1}\,{-1}\,0\,0)&13(1\,{-1}\,{-1}\,0\,0)&14({-1}\,1\,1\,0\,0)\\
15(0\,0\,0\,{-1}\,1)&16(0\,0\,0\,1\,{-1})&17(0\,0\,{-1}\,{-1}\,1)&18(0\,0\,1\,1\,{-1})\\
19(0\,0\,1\,0\,0)&20(0\,0\,{-1}\,0\,0)&21(0\,0\,0\,0\,1)&22(0\,0\,0\,0\,{-1})\\
23(0\,0\,1\,2\,{-1})&24(0\,0\,{-1}\,{-2}\,1)&25({-1}\,2\,1\,0\,0)&26(1\,{-2}\,{-1}\,0\,0)\\
27(0\,1\,0\,{-1}\,0)&28(0\,{-1}\,0\,1\,0)&29(0\,1\,1\,1\,{-1})&30(0\,{-1}\,{-1}\,{-1}\,1)\\
31({-1}\,1\,1\,1\,0)&32(1\,{-1}\,{-1}\,{-1}\,0)&33(1\,{-1}\,0\,1\,{-1})&34({-1}\,1\,0\,{-1}\,1)\\
35(0\,1\,1\,1\,0)&36(0\,{-1}\,{-1}\,{-1}\,0)&37(1\,{-1}\,0\,1\,0)&38({-1}\,1\,0\,{-1}\,0)\\
39(0\,1\,0\,{-1}\,1)&40(0\,{-1}\,0\,1\,{-1})&41({-1}\,1\,1\,1\,{-1})&42(1\,{-1}\,{-1}\,{-1}\,1)
\end{array}
\]
\caption{Ray generators for $5$-dimensional fan}
\label{rays}
\end{table}

\begin{table}[H]
\tiny
\begin{gather*}
[1\,3\,7\,13\,16]\,\,[1\,3\,7\,13\,17]\,\,[1\,3\,7\,16\,23]\,\,[1\,3\,7\,17\,21]\,\,[1\,3\,7\,21\,35]\,\,[1\,3\,7\,23\,35]\,\,[1\,3\,10\,13\,16]\,\,[1\,3\,10\,16\,22]\\
[1\,3\,10\,17\,24]\,\,[1\,3\,10\,22\,27]\,\,[1\,3\,10\,24\,27]\,\,[1\,3\,11\,21\,35]\,\,[1\,3\,11\,22\,27]\,\,[1\,3\,11\,23\,29]\,\,[1\,3\,11\,23\,35]\\
[1\,3\,16\,22\,29]\,\,[1\,3\,16\,23\,29]\,\,[1\,3\,17\,21\,39]\,\,[1\,3\,17\,24\,39]\,\,[1\,7\,9\,21\,35]\,\,[1\,7\,9\,23\,35]\,\,[1\,8\,10\,22\,27]\\
[1\,8\,10\,24\,27]\,\,[3\,6\,7\,16\,20]\,\,[3\,6\,7\,16\,25]\,\,[3\,6\,7\,17\,20]\,\,[3\,6\,7\,17\,25]\,\,[3\,7\,13\,16\,20]\,\,[3\,7\,13\,17\,20]\\
[3\,7\,16\,23\,25]\,\,[3\,7\,17\,21\,25]\,\,[3\,7\,21\,25\,35]\,\,[3\,7\,23\,25\,35]\,\,[3\,10\,13\,16\,20]\,\,[3\,10\,16\,22\,25]\,\,[3\,10\,17\,24\,25]\\
[3\,10\,22\,25\,27]\,\,[3\,10\,24\,25\,27]\,\,[3\,11\,21\,25\,35]\,\,[3\,11\,22\,25\,27]\,\,[3\,11\,23\,25\,29]\,\,[3\,11\,23\,25\,35]\,\,[3\,16\,22\,25\,29]\\
[3\,16\,23\,25\,29]\,\,[3\,17\,21\,25\,39]\,\,[3\,17\,24\,25\,39]\,\,[7\,9\,21\,25\,31]\,\,[7\,9\,21\,25\,35]\,\,[7\,9\,23\,25\,31]\,\,[7\,9\,23\,25\,35]
\end{gather*}
\caption{Fan with $384$ cones of top dimension obeys symmetry by group action and $-1$; list of orbit representatives}
\label{cones}
\end{table}

\begin{rema}
\label{rem.D8}
In Example \ref{exa.noAm3}, if we let
$\widetilde{G}:=\mathfrak{D}_8$, with generators $\tilde x$ of order $8$ and $\tilde y$ of order $2$, act on $X$ via the
homomorphism $\mathfrak{D}_8\to \mathfrak{D}_4$,
$\tilde x\mapsto x$, $\tilde y\mapsto y$,
then $X$ is $\widetilde{G}$-unirational.
By a completely analogous computation, or by functoriality of the Leray spectral sequence, we find the class in $\rH^3(\widetilde{G},\Pic(X)^\vee)$ given by twice the element of $(\Pic(X)^\vee)^4$ obtained above (with an analogous complex for group cohomology of $\widetilde{G}$); this is trivial by \eqref{eqn.twotorsion}.
\end{rema}

\section{Projective unirationality}
\label{scn.projectiveunirationality}
In \cite{tz}, cohomological obstructions are explored not only to equivariant unirationality, but also to a new, related condition on $G$-actions, defined as follows.
A $G$-action is
\begin{itemize}
\item[({\bf PU})] {\em projectively unirational} if there exists a central cyclic extension $\widetilde{G}$ of $G$, such that $X$ is $\widetilde{G}$-unirational. 
\end{itemize}

We have the following cohomological obstruction:
\begin{itemize}
\item If the $G$-action is projectively unirational, then $\mathrm{Am}^2(X,H)$ is cyclic and $\mathrm{Am}^3(X,H)=0$, for all $H\subseteq G$.
\end{itemize}

\begin{exam}
\label{exam:P1continued}
In Example \ref{exam:p1}, the action of the Klein four-group $C_2^2$ on $\bP^1$ 
is projectively unirational, as is that of any $G$ via $\lambda\colon G\to \PGL_2$ with image the Klein four-group.
However, the product action of $C_2^2\times C_2^2$ on $\bP^1\times \bP^1$ has noncyclic $\mathrm{Am}^2$, hence is not projectively unirational.
\end{exam}

We wish to reformulate the condition \textbf{(PU)} entirely in terms of $G$-actions.
For instance, a projective representation $G\to \PGL_n$
determines a $G$-action on $\bP^{n-1}$.
The condition, to be equivariantly rationally dominated by this $\bP^{n-1}$, is one that is phrased entirely in terms of $G$-equivariant geometry.
We show that with finitely many choices of projective representation, each allowed in the form of a sum of arbitrarily many copies, we obtain an equivalent condition to \textbf{(PU)}.

We start with an easy observation.
Given a central cyclic extension
\begin{equation}
\label{eqn.cce}
1\to Z\to \widehat{G}\to G\to 1
\end{equation}
and identification of $Z$ with $\langle \zeta\rangle$, where $\zeta\in k^\times$ is a primitive $\ell$th root of unity, $\ell=|Z|$,
there exists a representation $W$ of $\widehat{G}$ of positive dimension, such that the subgroup of $\widehat{G}$ acting by scalars on $W$ is precisely $Z$, with action given by the identification with $\langle\zeta\rangle$.
For instance, we may decompose the regular representation $\widehat{V}$ of $\widehat{G}$ as
\[ \widehat{V}=\bigoplus_{j\in \bZ/\ell\bZ} \widehat{V}_j, \]
according to type upon restriction to $Z$,
and take $W=\widehat{V}_1$.

A central cyclic extension \eqref{eqn.cce}, with identification $Z\cong \langle \zeta\rangle$, determines a class in
$\rH^2(G,k^\times)$, the obstruction to existence of a splitting when we allow to replace $\ell$ by a suitable multiple, cf.\ \cite[\S 2]{KT-Brauer}.

\begin{prop}
\label{prop.PU}
Let $X$ be a smooth projective rational variety over $k$ with regular action of a finite group $G$.
Let us fix $\gamma\in \rH^2(G,k^\times)$, a central cyclic extension \eqref{eqn.cce} of class $\gamma$, and a representation $W$ of positive dimension, where the subgroup acting by scalars is $Z$, acting by $Z\cong \langle\zeta\rangle$, $\zeta\in k^\times$.
The following are equivalent.
\begin{itemize}
\item[(i)] There exists a central cyclic extension $\widetilde{G}$ of $G$ such that $X$ is $\widetilde{G}$-unirational, where $\widetilde{G}$ has class $\gamma$ for a suitable identification of $\ker(\widetilde{G}\to G)$ with roots of unity in $k$.
\item[(ii)] For some positive integer $m$ there exists a dominant $G$-equivariant rational map $\bP((W^{\oplus m})^\vee)\dashrightarrow X$.
\end{itemize}
\end{prop}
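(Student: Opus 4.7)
The plan is to prove the two implications separately. For $(\mathrm{ii})\Rightarrow(\mathrm{i})$, I would take $\tilde{G}=\hat{G}$: since $Z$ acts on $W$ (hence on $W^{\oplus m}$) via the identification $Z\cong\langle\zeta\rangle\subset k^\times$---that is, by scalars---it acts trivially on $\mathbb{P}((W^{\oplus m})^\vee)$, so the $\hat{G}$-action on that projective space factors through $G$. The given $G$-equivariant dominant rational map is then automatically $\hat{G}$-equivariant (as $Z$ acts trivially on $X$ via $\hat{G}\twoheadrightarrow G$), exhibiting $X$ as $\hat{G}$-unirational, with $\hat{G}$ of class $\gamma$ by construction.

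For the converse $(\mathrm{i})\Rightarrow(\mathrm{ii})$, fix $\tilde{G}$ as in (i) with $Z':=\ker(\tilde{G}\to G)\cong\langle\zeta'\rangle$, and let $V$ be a $\tilde{G}$-representation carrying a dominant $\tilde{G}$-equivariant rational map $\varphi\colon \mathbb{P}(V^\vee)\dashrightarrow X$. I would decompose $V=\bigoplus_j V_j$ according to the $Z'$-characters and focus on the summand $V_1$ on which $Z'$ acts by the chosen identification character. The guiding observation is that $V_1$ intrinsically carries the data of a projective $G$-representation of class $\gamma$, equivalently that of a $\hat{G}$-representation on which $Z$ acts by $\zeta$; because $W$ was chosen as $\hat V_1$, the $Z$-isotypic $\zeta$-component of the regular representation of $\hat{G}$, every such $\hat{G}$-representation embeds as a $\hat{G}$-subrepresentation of $W^{\oplus m_0}$ for some $m_0$. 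After first replacing $V$ by $V\oplus V_1^\#$ for a suitable $\tilde{G}$-representation $V_1^\#$ on which $Z'$ acts by the identification character (which preserves the dominant map via the linear projection $\mathbb{P}((V\oplus V_1^\#)^\vee)\dashrightarrow\mathbb{P}(V^\vee)$), I may assume $V_1\ne 0$ and that $G$ acts generically freely on $\mathbb{P}(V_1^\vee)$.

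Since $Z'$ acts trivially on $X$, $\varphi$ is $Z'$-invariant and descends to a $G$-equivariant dominant rational map $\overline\varphi\colon\mathbb{P}(V^\vee)/Z'\dashrightarrow X$. The technical core is then to compare $\mathbb{P}(V^\vee)/Z'$ and $\mathbb{P}((W^{\oplus m})^\vee)$ as $G$-varieties via their common target $\mathbb{P}(V_1^\vee)$. The linear projection $\mathbb{P}(V^\vee)\dashrightarrow\mathbb{P}(V_1^\vee)$ along $\bigoplus_{j\ne 1}V_j^\vee$ is $\tilde{G}$-equivariant and birationally realizes the total space as a $\bigoplus_{j\ne 1}V_j^\vee$-bundle over $\mathbb{P}(V_1^\vee)$; passing to $Z'$-quotients, the fibers become $(\bigoplus_{j\ne 1}V_j^\vee)/Z'$, which is rational by Fischer's theorem on cyclic-group quotients of affine spaces. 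The equivariant no-name lemma then yields a $G$-birational equivalence $\mathbb{P}(V^\vee)/Z'\sim_G \mathbb{P}(V_1^\vee)\times\mathbb{A}^N$, with trivial $G$-action on $\mathbb{A}^N$, where $N=\sum_{j\ne 1}\dim V_j$. The analogous argument applied to the genuine linear projection $\mathbb{P}((W^{\oplus m})^\vee)\dashrightarrow\mathbb{P}(V_1^\vee)$ arising from an inclusion $V_1\hookrightarrow W^{\oplus m}$ of $\hat{G}$-representations gives $\mathbb{P}((W^{\oplus m})^\vee)\sim_G\mathbb{P}(V_1^\vee)\times\mathbb{A}^{N'}$, with $N'=m\dim W-\dim V_1$. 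Choosing $m$ large enough that $N'\ge N$ and projecting away the excess $\mathbb{A}^{N'-N}$ yields a $G$-equivariant dominant rational map $\mathbb{P}((W^{\oplus m})^\vee)\dashrightarrow\mathbb{P}(V^\vee)/Z'$, whose composition with $\overline\varphi$ gives the map required in (ii).

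The hardest part is expected to be the careful application of the no-name lemma in the $Z'$-quotient setting: $\mathbb{P}(V^\vee)/Z'\to \mathbb{P}(V_1^\vee)$ is not a genuine vector bundle but arises by $Z'$-descent from a $Z'$-equivariant vector bundle on $V_1^\vee$, so one must first trivialize equivariantly on a $G$-invariant open locus where $G\times\G_m$ acts freely, and then descend to both the $Z'$- and $\G_m$-quotients while checking that all birational identifications respect the $G$-action and compose correctly. The generic freeness of the $G$-action on $\mathbb{P}(V_1^\vee)$ (ensured by the preliminary enlargement of $V$) and the rationality of cyclic-group quotients of affine spaces are standard, but their joint use in setting up the comparison $\mathbb{P}(V^\vee)/Z'\sim_G \mathbb{P}(V_1^\vee)\times\mathbb{A}^N$ and $\mathbb{P}((W^{\oplus m})^\vee)\sim_G\mathbb{P}(V_1^\vee)\times\mathbb{A}^{N'}$ will require attention to bookkeeping of characters and twists throughout.
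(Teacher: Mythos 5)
The direction (ii) $\Rightarrow$ (i) is fine. In (i) $\Rightarrow$ (ii), however, there is a genuine gap at the step you yourself single out as the technical core: the claimed $G$-birational equivalence $\bP(V^\vee)/Z'\sim_G\bP(V_1^\vee)\times\bA^N$. The no-name lemma does not apply to $\bP(V^\vee)/Z'\to\bP(V_1^\vee)$: this is not a $G$-vector bundle, since $Z'$ acts on the fiber $\bigl(\bigoplus_{j\ne1}V_j^\vee\bigr)\otimes\cO(1)$ through the nontrivial characters $(\zeta')^{1-j}$, so the fibers of the quotient are cyclic quotients of affine spaces (already for one $2$-dimensional summand, a quadric cone), not affine spaces with linear $G$-action. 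Nor can you apply the lemma upstairs to $\bP(V^\vee)\to\bP(V_1^\vee)$ as a $\widetilde{G}$-bundle, because $\widetilde{G}$ does not act generically freely on $\bP(V_1^\vee)$: its central subgroup $Z'$ acts trivially there, and generic freeness of the base is exactly the hypothesis of the lemma. Fischer's theorem gives rationality of each fiber quotient but says nothing about an equivariant product decomposition of the family; this is precisely the situation in which nontrivial twists can obstruct such a decomposition, so the step needs a genuine argument rather than bookkeeping. (A smaller point you elide: identifying the $\widetilde{G}$-representation $V_1$ with a $\widehat{G}$-representation on which $Z$ acts by $\zeta$, when $|Z'|\ne|Z|$, already requires the theory of projective representations.)

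The paper avoids the cyclic quotient entirely by working with affine cones. After reducing to $\widehat{G}=\widetilde{G}$ via projective representation theory, it first produces a dominant $\widehat{G}$-equivariant rational map from the linear representation $(W^{\oplus m})^\vee$ itself, using Burnside's theorem that tensor powers of a faithful representation contain all irreducibles (cover $\bP(V^\vee)$ by $V^\vee$, then by a sum of tensor powers of $W^\vee$, then by copies of $W^\vee$). It then descends to the projective statement by identifying $(W^{\oplus m})^\vee$ birationally with $\cO_{\bP((W^{\oplus m})^\vee)}(-1)$, passing by $Z$-invariance to the honest $G$-line bundle $\cO(-\ell)$, applying the no-name lemma to that line bundle, and absorbing the resulting trivial $\bP^1$ factor by means of a nonconstant invariant function on $\bP(W^\vee)$. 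If you want to rescue your route, you would need an analogous device converting the $Z'$-quotient of the bundle into an honest $G$-vector bundle before invoking the no-name lemma.
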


\begin{proof}
We suppose $G$ nontrivial.
Clearly, (ii) $\Rightarrow$ (i).
For (i) $\Rightarrow$ (ii), we use the
theory of projective representations \cite[Chap.\ 11]{isaacs} to
reduce to showing that (i), with given extension $\widetilde{G}$, implies the case $\widehat{G}=\widetilde{G}$ of (ii).

To establish (ii) it suffices to establish the
existence of a dominant $\widehat{G}$-equivariant rational map $(W^{\oplus m})^\vee\dashrightarrow X$.
Indeed, by the
equivariant birational identification with
$\mathcal{O}_{\bP((W^{\oplus m})^\vee)}(-1)$ we obtain
from $Z$-invariance an induced
$G$-equivariant rational map
from $\mathcal{O}_{\bP((W^{\oplus m})^\vee)}(-\ell)$.
Then the No-Name Lemma (cf.\ \cite[\S 2]{KT-Brauer}) yields
$\bP((W^{\oplus m})^\vee)\times \bP^1\dashrightarrow X$,
where $G$ acts trivially on the factor $\bP^1$.
With any nonconstant invariant rational function on $\bP(W^\vee)$ we get a $G$-equivariant dominant rational map to $X$ from
$\bP((W^{\oplus m})^\vee)\times \bP(W^\vee)$, hence also from $\bP((W^{\oplus (m+1)})^\vee)$.

It remains to show that $\widehat{G}$-unirationality implies the existence of a dominant $\widehat{G}$-equivariant rational map $(W^{\oplus m})^\vee\dashrightarrow X$.
This follows from the classical result of Burnside, that the tensor powers of a faithful representation contain all irreducible representations
(see \cite{steinberg}), by covering the projectivization of a representation by the representation, then by a sum of tensor powers of $W^\vee$, and finally by a sum of copies of $W^\vee$.
\end{proof}

For each $\gamma\in \rH^2(G,k^\times)$ we fix $\widehat{G}_\gamma$ and $W_\gamma$ as in Proposition \ref{prop.PU}.

\begin{coro}
\label{cor.PU}
For a smooth projective rational variety $X$ over $k$ with regular action of a finite group $G$, the following are equivalent.
\begin{itemize}
\item[(i)] The $G$-action on $X$ is projectively unirational.
\item[(ii)] The $G$-action on $X$ is $\widehat{G}_\gamma$-unirational for some $\gamma\in \rH^2(G,k^\times)$.
\item[(iii)] For some $\gamma\in \rH^2(G,k^\times)$ and positive integer $m$, there exists a dominant $G$-equivariant rational map $\bP((W_\gamma^m)^\vee)\dashrightarrow X$.
\end{itemize}
\end{coro}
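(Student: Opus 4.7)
The plan is to establish the cycle (ii) $\Rightarrow$ (i) $\Rightarrow$ (iii) $\Rightarrow$ (ii), with the middle arrow supplied by Proposition~\ref{prop.PU} applied to a suitable $\gamma$, and the other two arrows read off directly from the definitions. There should be no substantive obstacle: all of the nontrivial content is already packaged in Proposition~\ref{prop.PU}, and the Corollary itself is a repackaging that eliminates reference to a specific cyclic extension $\widetilde{G}$ in the hypothesis in favor of the canonical fixed family $\{\widehat{G}_\gamma\}_\gamma$.

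The implication (ii) $\Rightarrow$ (i) I would dispatch first, as it is immediate: $\widehat{G}_\gamma$ is by construction a central cyclic extension of $G$, so $\widehat{G}_\gamma$-unirationality directly witnesses condition \textbf{(PU)}.

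For (i) $\Rightarrow$ (iii), the key step is to produce the class $\gamma$ to which Proposition~\ref{prop.PU} will be applied. Given an extension $\widetilde{G}$ as in \textbf{(PU)}, a choice of identification of $\ker(\widetilde{G}\to G)$ with a cyclic subgroup of roots of unity in $k$ assigns to $\widetilde{G}$ a class $\gamma \in \rH^2(G,k^\times)$. With this $\gamma$ and the fixed data $\widehat{G}_\gamma$, $W_\gamma$, condition (i) of Proposition~\ref{prop.PU} is satisfied, so the Proposition yields a dominant $G$-equivariant rational map $\bP((W_\gamma^{\oplus m})^\vee) \dashrightarrow X$ for some $m$, which is exactly (iii) of the Corollary.

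For (iii) $\Rightarrow$ (ii), the idea is that the given map is automatically $\widehat{G}_\gamma$-equivariant. On the source, the $\widehat{G}_\gamma$-action on $\bP((W_\gamma^{\oplus m})^\vee)$ factors through $G$, since the central subgroup $Z \subset \widehat{G}_\gamma$ acts on $W_\gamma$ by scalars and hence trivially on its projectivization; on the target, the $G$-action on $X$ lifts trivially along $Z$ to a $\widehat{G}_\gamma$-action. Taking $V := W_\gamma^{\oplus m}$, regarded as a representation of $\widehat{G}_\gamma$, the map then witnesses $\widehat{G}_\gamma$-unirationality of $X$, which is (ii).
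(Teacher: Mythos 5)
Your proposal is correct and follows the route the paper intends: the Corollary is left as an immediate consequence of Proposition~\ref{prop.PU}, and your cycle (ii) $\Rightarrow$ (i) $\Rightarrow$ (iii) $\Rightarrow$ (ii) just spells this out, with (i) $\Rightarrow$ (iii) delegated to the Proposition and (iii) $\Rightarrow$ (ii) being the same observation as the ``Clearly, (ii) $\Rightarrow$ (i)'' step in the Proposition's proof (the central subgroup $Z$ acts by scalars on $W_\gamma^{\oplus m}$, hence trivially on its projectivization). No gaps.
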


\begin{rema}
\label{rem.PUtoric}
When $X$ is a toric variety, with $G$-action preserving the torus, condition (ii) of Corollary \ref{cor.PU} can be tested effectively by applying Theorem \ref{thm:main} to each of the groups $\widehat{G}_\gamma$.
\end{rema}

\begin{rema}
\label{rem.PUsubtlety}
Consider a $G$-action on $X$ which is projectively unirational but not unirational.
For $\gamma\in \rH^2(G,k^\times)$
in Corollary \ref{cor.PU} (ii)--(iii), we must have
$\mathrm{Am}^2(X,G)\subseteq \langle\gamma\rangle$, with $\gamma\ne 0$.
One might ask, could Corollary \ref{cor.PU} be strengthened by imposing the additional condition
$\mathrm{Am}^2(X,G)=\langle\gamma\rangle$
in (ii)--(iii)?
The answer is no.
In Example \ref{exa.noAm3} we have
such a $G$-action (Remark \ref{rem.D8}), with $\mathrm{Am}^2(X,G)=0$.
\end{rema}

\appendix
\section{Remarks on homological algebra}
\label{app.homologicalalgebra}
Let $\mathcal{A}$ be an abelian category with enough injectives.
In this appendix we record some compatibilities among
$\Ext^i$ and spectral sequences.

\subsection{Extension class}
\label{ss.extensionclass}
Let
\begin{equation}
\label{eqn.ABC}
0\to A\stackrel{u}\to B\stackrel{v}\to C\to 0
\end{equation}
be a short exact sequence in $\mathcal{A}$.
We start by recalling the extension class in a classical setting, where $\mathcal{A}$ also has enough projectives.
For an injective resolution $(I^\bullet)$ of $A$ with $\iota_A\colon A\to I^0$ and projective resolution $(P_\bullet)$ of $C$ with $\varepsilon_C\colon P_0\to C$,
we extend $\iota_A$ to $\beta\colon B\to I^0$ and lift $\varepsilon_C$ to $\varphi\colon P_0\to B$.
The extension class of \eqref{eqn.ABC}
is the class of the map $\ker(\varepsilon_C)\to A$ induced by $\varphi$, in $\Hom(\ker(\varepsilon_C),A)/\Hom(P_0,A)$, which is also the class of the map $C\to \coker(\iota_A)$ induced by $\beta$, in an analogous cokernel of $\Hom$'s.
The respective cokernels are identified in the standard way (cf.\ \S\ref{ss.groupcohomology}) and also directly (and compatibly) via the
expression
\[
\Ext^1(C,A)\cong \frac{\ker\bigl(\Hom(P_0,I^0)\stackrel{d\circ(\,)\circ \partial}\longrightarrow \Hom(P_1,I^1)\bigr)}{\ker(d\circ(\,))+\ker((\,)\circ \partial)},
\]
given in \cite[\S III.3]{HS}.

Returning to the setting of abelian category with enough injectives, we have the extension class
\begin{equation}
\label{eqn.classABC}
\alpha\in \Ext^1(C,A),
\end{equation}
given by $C\to \coker(\iota_A)$ as above.

\begin{lemm}
\label{lem.ABCidentity}
We have connecting homomorphisms
\begin{align*}
\End(A)&\to \Ext^1(C,A),&1_A&\mapsto \alpha, \\
\End(C)&\to \Ext^1(C,A),&1_C&\mapsto -\alpha.
\end{align*}
\end{lemm}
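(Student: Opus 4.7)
The strategy is to unpack both connecting homomorphisms explicitly in terms of the injective resolution $(I^\bullet,d^\bullet)$ of $A$ and the lift $\beta\colon B\to I^0$ of $\iota_A$ from \S\ref{ss.extensionclass}; recall that $\alpha$ is represented by the cocycle $\bar\beta\colon C\to I^1$ induced from $d^0\circ\beta$.

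For $\End(A)\to\Ext^1(C,A)$: since each $I^n$ is injective, applying $\Hom(-,I^\bullet)$ to \eqref{eqn.ABC} produces a short exact sequence of complexes
\[ 0\to\Hom(C,I^\bullet)\to\Hom(B,I^\bullet)\to\Hom(A,I^\bullet)\to 0, \]
whose long exact sequence in cohomology is the long $\Ext$ sequence. The snake-lemma boundary sends the cocycle $\iota_A\in\Hom(A,I^0)$ representing $1_A$ to the class of the map $C\to I^1$ obtained by lifting to $\beta\in\Hom(B,I^0)$, forming $d^0\circ\beta\in\Hom(B,I^1)$, and using that this vanishes on $A$ to factor through $C$. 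That is precisely $\bar\beta$, so $1_A\mapsto\alpha$.

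For $\End(C)\to\Ext^1(C,A)$, the functor $\Hom(C,-)$ is only left exact, so I first lift \eqref{eqn.ABC} to a short exact sequence of injective resolutions via the horseshoe lemma: take $I^\bullet_A:=I^\bullet$, pick any injective resolution $I^\bullet_C$ of $C$ with augmentation $\iota_C$, and set $I^n_B:=I^n\oplus I^n_C$ with augmentation $b\mapsto(\beta(b),\iota_C(v(b)))$. The differential $d^n_B$ then takes the block form $\bigl(\begin{smallmatrix} d^n & \phi^n \\ 0 & d^n_C \end{smallmatrix}\bigr)$ for some $\phi^n\colon I^n_C\to I^{n+1}$. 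The vanishing of $d^0_B$ composed with the augmentation, combined with the identity $d^0\circ\beta=\bar\beta\circ v$, forces $\phi^0\circ\iota_C=-\bar\beta$.

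The resulting short exact sequence of complexes is split in each degree, hence remains exact after $\Hom(C,-)$, and its snake boundary computes $\End(C)\to\Ext^1(C,A)$. The cocycle $\iota_C$ representing $1_C$ lifts along the splitting to $(0,\iota_C)\in\Hom(C,I^0_B)$; applying $d^0_B$ yields $(\phi^0\circ\iota_C,0)=(-\bar\beta,0)$, whose preimage in $\Hom(C,I^1)$ is $-\bar\beta$, giving $1_C\mapsto-\alpha$. The main obstacle is the sign identity $\phi^0\circ\iota_C=-\bar\beta$ that emerges from the horseshoe construction; once that is pinned down, both assertions follow from the standard snake-lemma convention.
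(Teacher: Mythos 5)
Your proof is correct and follows essentially the same route as the paper: the first map is computed by applying $\Hom(-,I^\bullet)$ directly, and the second by building the compatible (horseshoe) resolution $I^n\oplus I^n_C$ of $B$ with augmentation $(\beta,\iota_C\circ v)$ and applying $\Hom(C,-)$, with the sign coming from the forced identity $\phi^0\circ\iota_C=-\bar\beta$ (the paper's map $\gamma$ with $\gamma\circ v=-d\circ\beta$). Nothing further is needed.
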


\begin{proof}
The first one follows directly from application of $\Hom({-},I^\bullet)$.
We get the second from an injective resolution $(K^\bullet)$ of $C$ and
compatible injective resolution $(J^\bullet)$ of $B$, with $J^n=I^n\oplus K^n$, by $\Hom(C,{-})$.
\end{proof}

\subsection{Derived category}
\label{ss.derivedcategory}
The derived category supplies a further perspective.
We consider a short exact sequence
of complexes
\[
0\to A^\bullet\stackrel{u}\to B^\bullet\stackrel{v}\to C^\bullet\to 0
\]
in $\mathcal{A}$, where \eqref{eqn.ABC} may be recovered by taking the complexes to consist of a single object in degree $0$.
There is an associated distinguished triangle
\[ A^\bullet\stackrel{u}\to B^\bullet\stackrel{v}\to C^\bullet\to A^\bullet[1] \]
in the derived category $\mathbf{D}(\mathcal{A})$.
Some conventional choices are required; we follow \cite{conrad}, which gives
$C\to A^\bullet[1]$ as $-p$ composed with the formal inverse to the quasi-isomorphism $q$:
\[
\xymatrix{
\mathrm{cone}^\bullet(u) \ar[d]_{q} \ar[r]^{-p} & A^\bullet[1] \\
C^\bullet
}
\]
with
\begin{align*}
\mathrm{cone}^\bullet(u)&=\biggl(\dots\to A^{n+1}\oplus B^n
\stackrel{\setlength\arraycolsep{2pt}\begin{pmatrix}\scriptstyle-d_A&\scriptstyle0\\ \scriptstyle u&\scriptstyle d_B\end{pmatrix}}{\relbar\joinrel\longrightarrow}
A^{n+2}\oplus B^{n+1}\to \dots\biggr), \\
p(x,y)&=x, \\
q(x,y)&=v(y).
\end{align*}

When we specialize to \eqref{eqn.ABC},
with previous notation, we let $\gamma\colon C\to I^1$ denote the unique map with $\gamma\circ v=-d\circ \beta$ and get a commutative diagram
\[
\xymatrix{
(A\stackrel{u}\to B) \ar[r]^{-p} \ar[d]_q & (A\to 0)\ar[d]^{\iota_A} \\
(0\to C)\ar[r]^(0.4){\gamma} & (I^0\stackrel{-d}\to I^1\stackrel{-d}\to \dots)
}
\]
in $\mathbf{D}(\mathcal{A})$.
This gives the alternate description of $C\to A[1]$ as $(\iota_A)^{-1}\circ \gamma$, which is
\[ -\alpha\in \Ext^1(C,A)=\Hom_{\mathbf{D}(\mathcal{A})}(C,A[1]), \]
due to the minus sign in the definition of $\gamma$.
So the derived category morphism $C\to A[1]$, associated with the short exact sequence \eqref{eqn.ABC}, is \emph{inverse} to the class \eqref{eqn.classABC}.

The choice of definition of $\gamma$ is a convenient one, since we extend $\gamma$ to $\eta\colon K^0\to I^1$, where $(K^\bullet)$ is an injective resolution of $C$ with $\iota_C\colon C\to K^0$, and proceed iteratively by observing that $-d\circ \eta$ induces a map on the cokernel of $\iota_C$, which extends to $\eta\colon K^1\to I^2$, and so on, to obtain
\[ \eta\colon K^\bullet\to I^\bullet[1]. \]
The computation of the second assertion of Lemma \ref{lem.ABCidentity} may be neatly organized, using
\[
J^n=I^n\oplus K^n,\qquad J^n
\stackrel{\setlength\arraycolsep{2pt}\begin{pmatrix}\scriptstyle d&\scriptstyle \eta\\ \scriptstyle0&\scriptstyle d\end{pmatrix}}{\relbar\joinrel\longrightarrow}
J^{n+1},
\]
with augmentation map is $(\beta,\iota_C\circ v)\colon B\to J^0$.

We record a standard compatibility of connecting homomorphisms (cf.\ \cite[\S VI.1]{cartaneilenberg}).

\begin{lemm}
\label{lem.ABC2}
Let, additionally, a short exact sequence
\[ 0\to A'\to B'\to C'\to 0 \]
be given.
With connecting homomorphisms
\[
\xymatrix{
\Ext^i(A',C) \ar[r]^{\delta} \ar[d]_{\delta'} &
\Ext^{i+1}(A',A) \ar[d]^{\delta'} \\
\Ext^{i+1}(C',C) \ar[r]^{\delta} &
\Ext^{i+2}(C',A)
}
\]
we have $\delta\circ\delta'+\delta'\circ\delta=0$.
\end{lemm}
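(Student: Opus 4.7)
The plan is to work in the derived category $\mathbf{D}(\mathcal{A})$, extending the framework of \S\ref{ss.derivedcategory}. Each short exact sequence yields a distinguished triangle, with boundary morphism $h\colon C\to A[1]$ for the first and $h'\colon C'\to A'[1]$ for the second. Under the identification $\Ext^i(X,Y)=\Hom_{\mathbf{D}(\mathcal{A})}(X,Y[i])$, the connecting homomorphism $\delta$ is induced by post-composition with the appropriate shift of $h$, while $\delta'$ is induced by pre-composition with the appropriate shift of $h'$; both operations are instances of the Yoneda product with the corresponding extension classes.

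Given $\xi\in\Hom_{\mathbf{D}(\mathcal{A})}(A',C[i])$, by functoriality of the shift we have $(h[i]\circ\xi)[1]=h[i+1]\circ\xi[1]$, so both $\delta\delta'(\xi)$ and $\delta'\delta(\xi)$ evaluate, up to signs, to the composition
\[ C'\xrightarrow{h'} A'[1]\xrightarrow{\xi[1]} C[i+1]\xrightarrow{h[i+1]} A[i+2]. \]
The anti-commutativity then comes entirely from the relative signs of the two connecting maps: the covariant long exact sequence for $\Hom(A',-)$ attaches no sign to $\delta$, whereas the contravariant long exact sequence for $\Hom(-,C[i])$ is obtained from the rotated distinguished triangle, which introduces an extra factor $-1$ in $\delta'$ (compatibly with the sign convention recorded in \S\ref{ss.derivedcategory}, where the derived-category boundary morphism equals $-\alpha$). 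The two orders of composition thereby acquire opposite overall signs and cancel, giving $\delta\delta'+\delta'\delta=0$.

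The main obstacle is the careful bookkeeping of these signs, which in triangulated categories is notoriously prone to slip-ups, since the distinction between commutativity and anti-commutativity of iterated boundary maps is \emph{entirely} a matter of sign convention. To sidestep the abstract sign-tracking, I would alternatively verify the identity at the chain level via the Cartan--Eilenberg horseshoe construction, following \cite[\S VI.1]{cartaneilenberg}. Take compatible injective resolutions $I_A^\bullet\hookrightarrow I_B^\bullet\twoheadrightarrow I_C^\bullet$ with $I_B^n=I_A^n\oplus I_C^n$ and differential $\bigl(\begin{smallmatrix}d & \eta\\ 0 & d\end{smallmatrix}\bigr)$, where $\eta$ realizes $h$ as in \S\ref{ss.derivedcategory}, and analogously for the primed sequence. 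Representing classes of $\Ext^i(A',C)$ by chain maps and tracing through $\delta$ and $\delta'$ via the explicit splittings produces two chain-level representatives of $\delta\delta'(\xi)$ and $\delta'\delta(\xi)$ whose sum, after straightforward manipulation of the $\eta$'s and $\eta'$'s, is exhibited as the total differential applied to a manifest cochain on the associated bicomplex of $\Hom$'s, hence a coboundary. This is the cleanest way to convert the $-1$ from the rotation axiom into a verifiable chain-level identity.
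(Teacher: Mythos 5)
Your primary (derived-category) argument, as stated, does not yield anticommutativity. You attribute the sign to a fixed factor $-1$ carried by $\delta'$ (coming from the rotation axiom) while $\delta$ carries none. But a degree-independent sign on $\delta'$ affects $\delta\circ\delta'$ and $\delta'\circ\delta$ identically: writing $h\colon C\to A[1]$ and $h'\colon C'\to A'[1]$ for the boundary morphisms, both composites would equal $-\,h[i+1]\circ\xi[1]\circ h'$, giving $\delta\delta'=\delta'\delta$ rather than $\delta\delta'=-\delta'\delta$. The anticommutativity can only come from a sign depending on the cohomological degree of the class being acted on (e.g., the contravariant connecting map on $\Ext^i$ differing from the Yoneda product with the extension class by $(-1)^i$, consistent with the conventions recorded in \S\ref{ss.groupcohomology}); then applying $\delta'$ in degree $i$ versus degree $i+1$ accounts for exactly one factor of $-1$. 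As written, this part of your argument has a genuine gap.

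Your fallback chain-level argument is essentially the paper's proof, and it is the one to carry out; but note that it closes more cleanly than you anticipate. Represent a class of $\Ext^i(A',C)$ by $\rho\colon A'\to K^i$, where $(K^\bullet)$ is the injective resolution of $C$. Then $\delta'([\rho])=[\tau]$, where $\sigma\colon B'\to K^i$ extends $\rho$ and $d\circ\sigma$ induces $\tau\colon C'\to K^{i+1}$; hence $\delta(\delta'([\rho]))=[\eta\circ\tau]$, with $\eta\colon K^\bullet\to I^\bullet[1]$ as in \S\ref{ss.derivedcategory}. On the other side, $\delta'(\delta([\rho]))=\delta'([\eta\circ\rho])$ is computed from the extension $\eta\circ\sigma$ of $\eta\circ\rho$, whose image under the differential is $d\circ\eta\circ\sigma=-\eta\circ d\circ\sigma$, inducing $-\eta\circ\tau$ on $C'$. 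The entire content is the single identity $\eta\circ d=-d\circ\eta$ (that is, $\eta$ is a map of complexes into the shifted complex $I^\bullet[1]$): the two cocycle representatives are exact negatives of one another, so no coboundary correction and no auxiliary cochain on a bicomplex are needed.
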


\begin{proof}
We consider an element of
$\Ext^i(A',C)$, represented by
$\rho\colon A'\to K^i$.
To determine $\delta'([\rho])$ we
extend $\rho$ to $\sigma\colon B'\to K^i$, then $d\circ \sigma$ induces
$\tau\colon C'\to K^{i+1}$, and $\delta'([\rho])=[\tau]$.
So $\delta(\delta'([\rho]))=[\eta\circ\tau]$.
We have the extension $\eta\circ\sigma$ of $\eta\circ\rho$, and we obtain $$
\delta'(\delta([\rho]))=\delta'([\eta\circ\rho])=-[\eta\circ\tau]
$$
from $-d\circ \eta=\eta\circ d$.
\end{proof}

\subsection{Group cohomology}
\label{ss.groupcohomology}
Let $G$ be a finite group.
For the abelian category of $G$-modules we make frequent use of the standard isomorphism
\[
\Ext^n(M,N)\cong\rH^n(G,M^\vee\otimes N),
\]
when $M$ is a finitely generated torsion-free $G$-module.

Generally, for an injective resolution $(I^\bullet)$ of $A$ and projective resolution $(P_\bullet)$ of $C$, we may obtain
\[ \Ext^n(C,A)=\rH^n(\mathbf{R}\Hom^\bullet(C,A)) \]
as cohomology in degree $n$ of the complex
\[
\xymatrix@R=10pt{
\Hom(P_0,I^0) \ar[r]\ar@{}@<-2pt>[r]^{-} \ar[dr] &
\Hom(P_1,I^0) \ar@{}[d]|{\displaystyle\oplus} \ar[r] \ar[dr] &
\Hom(P_2,I^0) \ar@{}[d]|{\displaystyle\oplus} \ar[r]\ar@{}@<-2pt>[r]^(0.6){-}\ar[dr] & {\phantom{\cdots}} \\
& \Hom(P_0,I^1) \ar[r] \ar[dr] & \Hom(P_1,I^1) \ar@{}[d]|{\displaystyle\oplus} \ar[r]\ar@{}@<-2pt>[r]^(0.6){-} \ar[dr] & {\phantom{\cdots}} \\
& & \Hom(P_0,I^2) \ar[r]\ar@{}@<-2pt>[r]^(0.6){-} \ar[dr] & {\cdots} \\
& & & {\phantom{\cdots}}
}
\]
The slanted arrows are composition with $d$, and the horizontal ones, composition with $\pm \partial$, where the sign is $-$ when indicated.
This is identified with $\rH^n(\Hom(C,I^\bullet))$ and also with $\rH^n$ of
\[
\Hom(P_0,A)\stackrel{-}\longrightarrow \Hom(P_1,A)\longrightarrow \Hom(P_2,A)\stackrel{-}\longrightarrow \dots.
\]
The identification with the cohomology of $\Hom(P_\bullet,A)$, where the differential is composition with $\partial$ \emph{without} signs, is by multiplication by $(-1)^{n(n+1)/2}$ in degree $n$; cf.\ \cite[p.~13]{conrad}.

Although group cohomology $\rH^n(G,{-})$ is defined as $\mathrm{R}^n({-})^G$, it is sometimes described as $\rH^n$ of the complex arising by $\Hom_G$ from a free $\bZ[G]$-module resolution $(P_\bullet)$ of $\bZ$; see \cite[Rmk., \S III.1]{brown}.
The identification with $\Ext$ and comparison of connecting homomorphisms are as follows:
\begin{align*}
\Ext^n(M,N)&\cong \rH^n(\Hom_G(P_\bullet,M^\vee\otimes N))&\text{with sign}& \quad (-1)^{\frac{n(n+1)}{2}}, \\
\rH^n(\Hom_G(P_\bullet,C))&\to \rH^{n+1}(\Hom_G(P_\bullet,A))&\text{with sign}&\quad (-1)^{n+1},
\end{align*}
for $G$-modules $M$ and $N$, with $M$ finitely generated and torsion-free, and short exact sequence \eqref{eqn.ABC} of $G$-modules.

\subsection{Spectral sequence}
\label{ss.spectralsequence}
In applications of the
Leray spectral sequence we may need a $d_2$ map in explicit form.
More generally, we are interested in the hypercohomology spectral sequence, for a left exact functor from one abelian category to another.
For instance, \cite[Prop.\ 6.1]{KT-effect} relates a $d_2$ map to connecting homomorphisms in group cohomology.
The proof is by application of the truncation functor $\tau_{\le 1}$ and standard compatibility.
With the convention, following Grothendieck \cite{grothtohoku}, of double complexes with commuting squares, the
compatibility takes the following form.

\begin{lemm}
\label{lem.twoterm}
Let $(A^0\stackrel{g}\to A^1)$ be a $2$-term complex in $\mathcal{A}$, and let $F$ be a left exact functor to another abelian category.
In the spectral sequence $$
\mathrm E_2^{pq}=\mathrm{R}^p_{\phantom0}\!F(\rH^q_{\phantom0}(A^\bullet))\Rightarrow \mathbb{R}^{p+q}_{\phantom0}\!F(A^\bullet)
$$
the map
$d_2^{i1}\colon \mathrm{R}^i\!F(\coker(g))\to \mathrm{R}^{i+2}\!F(\ker(g))$ is equal to the composite
\[ \mathrm{R}^i\!F(\coker(g))\to \mathrm{R}^{i+1}\!F(\image(g))\to \mathrm{R}^{i+2}\!F(\ker(g)). \]
\end{lemm}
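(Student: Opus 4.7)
The plan is to factor the proof through the canonical distinguished triangle
\[ \ker(g)[0] \to A^\bullet \to \coker(g)[-1] \xrightarrow{\kappa} \ker(g)[1] \]
in $\mathbf{D}(\mathcal{A})$ arising from the short exact sequence of complexes $0\to \tau_{\le 0}A^\bullet \to A^\bullet \to \tau_{\ge 1}A^\bullet\to 0$, with $\tau_{\le 0}A^\bullet \simeq \ker(g)$ and $\tau_{\ge 1}A^\bullet \simeq \coker(g)[-1]$. The connecting morphism $\kappa$ represents a class in $\Ext^2(\coker(g),\ker(g))$, and my goal is to show that both sides of the lemma coincide with the action of $\mathrm{R}F$ on $\kappa$.

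First I would identify $d_2^{i,1}$ with the map induced by $\kappa$ on derived functors. Since $A^\bullet$ has cohomology concentrated in degrees $0$ and $1$, its hypercohomology spectral sequence has only two nonzero rows, $d_2^{i,1}$ is the only possibly nontrivial differential on $E_2$, and the spectral sequence degenerates at $E_3$. Taking a Cartan--Eilenberg resolution of $A^\bullet$ whose two columns are injective resolutions of $A^0$ and $A^1$ joined by a chosen lift of $g$, one reads $d_2^{i,1}$ off directly as the connecting homomorphism $\kappa_*\colon \mathrm{R}^i F(\coker g)\to \mathrm{R}^{i+2} F(\ker g)$ associated with the distinguished triangle.

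Second I would identify $\kappa$ with the Yoneda splice of
\[ 0 \to \ker(g) \to A^0 \to \image(g) \to 0 \qquad\text{and}\qquad 0 \to \image(g) \to A^1 \to \coker(g) \to 0, \]
namely the $2$-extension $0\to\ker(g)\to A^0\xrightarrow{g} A^1\to\coker(g)\to 0$. The explicit cone/cocycle model of $\kappa$ from Section \ref{ss.derivedcategory} makes this visible, since that model uses precisely a lift of the inclusion $A^0\to A^1$ through injective resolutions. Because Yoneda products in $\Ext$ correspond to composition in the derived category, $\kappa_*$ then factors as the claimed composite $\mathrm{R}^i F(\coker g)\to \mathrm{R}^{i+1} F(\image g)\to \mathrm{R}^{i+2} F(\ker g)$; this multiplicativity is also an instance of Lemma \ref{lem.ABC2}.

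The main obstacle I anticipate is sign bookkeeping. As highlighted in \S\ref{ss.derivedcategory}, the derived-category connecting morphism of a short exact sequence differs by a sign from the corresponding $\Ext^1$ class, and Lemma \ref{lem.ABC2} records that iterated connecting maps anticommute rather than commute. Ensuring that these conventions combine to produce the stated equality, rather than merely equality up to sign, will require an explicit cocycle-level verification following the conventions of \S\ref{ss.derivedcategory}--\S\ref{ss.groupcohomology}, with particular care taken in the identification of the spectral-sequence $d_2$ against the cone-based connecting morphism.
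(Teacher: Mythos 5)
Your proposal is correct in outline but takes a more abstract route than the paper. You work in the derived category: you invoke the truncation triangle for $\tau_{\le 0}A^\bullet\to A^\bullet\to\tau_{\ge 1}A^\bullet$, identify $d_2^{i,1}$ with the map induced by the connecting morphism $\kappa\in\Hom_{\mathbf{D}(\mathcal{A})}(\coker(g)[-1],\ker(g)[1])\cong\Ext^2(\coker(g),\ker(g))$, recognize $\kappa$ as the Yoneda splice of the two short exact sequences through $\image(g)$, and conclude by multiplicativity. The paper instead builds everything by hand: it glues injective resolutions of $\ker(g)$, $\image(g)$, $\coker(g)$ into an explicit injective resolution \eqref{eqn.IItilde} of the two-term complex, whose differential carries the composite $\eta\circ\tilde\eta$ of the two comparison maps, writes down a concrete Cartan--Eilenberg resolution, and reads off $d_2^{i,1}=F(\eta)\circ F(\tilde\eta)$ by direct computation. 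The mathematical content is the same --- factorization through $\image(g)$ and identification of $d_2$ with a composite of connecting homomorphisms --- but your version outsources the two key identifications ($d_2$ equals $\kappa_*$, and $\kappa$ equals the splice) to standard facts whose sign conventions are exactly the delicate point, as you note yourself; this is in fact the truncation-based strategy the paper attributes to the proof of the cited result in \cite{KT-effect}, and the lemma here exists precisely to pin down the on-the-nose (not merely up-to-sign) statement. If you carried out the ``explicit cocycle-level verification'' you defer to the end, you would essentially reconstruct the resolution \eqref{eqn.IItilde} and the paper's computation. So your plan buys conceptual clarity, at the cost of leaving the one step that actually requires work --- the exact identification of $d_2^{i,1}$ with the stated composite in the conventions of \S\ref{ss.derivedcategory}--\S\ref{ss.groupcohomology} --- as an unverified assertion.
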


\begin{proof}
We have short exact sequences
\[
0\to D\stackrel{u}\to A^0\stackrel{v}\to B\to 0
\qquad\text{and}\qquad
0\to B\stackrel{\tilde u}\to A^1\stackrel{\tilde v}\to C\to 0,
\]
where $D$, $C$, $B$ denote the kernel, cokernel, and image of $g$.
We take $(I^\bullet)$, $(\widetilde{I}^\bullet)$, $(K^\bullet)$ to be injective resolutions of $D$, $C$, $B$, respectively.
To the first short exact sequence we associate a lift $\beta\colon A^0\to I^0$ of the augmentation map $\iota_D$ and $\eta\colon K^\bullet\to I^\bullet[1]$, as above.
Analogously, to the second we associate $\tilde\beta\colon A^1\to K^0$ and $\tilde\eta\colon \widetilde{I}^\bullet\to K^\bullet[1]$.
The complex
\begin{equation}
\label{eqn.IItilde}
I^0
\stackrel{\begin{pmatrix}\scriptstyle d\\ \scriptstyle 0\end{pmatrix}}\longrightarrow
I^1\oplus \widetilde{I}^0
\stackrel{\setlength\arraycolsep{2pt}\begin{pmatrix}\scriptstyle d&\scriptstyle \eta\circ\tilde\eta\\ \scriptstyle 0&\scriptstyle -d\end{pmatrix}}{\relbar\joinrel\relbar\joinrel\longrightarrow}
I^2\oplus \widetilde{I}^1
\stackrel{\setlength\arraycolsep{2pt}\begin{pmatrix}\scriptstyle d&\scriptstyle \eta\circ\tilde\eta\\ \scriptstyle 0&\scriptstyle -d\end{pmatrix}}{\relbar\joinrel\relbar\joinrel\longrightarrow}
\dots,
\end{equation}
is quasi-isomorphic to $(A^0\stackrel{g}\to A^1)$ by $\beta$ and $(-\eta\circ\tilde\beta,\iota_C\circ \tilde v)$.
For \eqref{eqn.IItilde} we have a Cartan-Eilenberg resolution $(L^{{\bullet}{\bullet}})$ with $L^{0q}=I^q\oplus I^{q+1}$ and
$L^{pq}=I^{p+q}\oplus \widetilde{I}^{p+q-1}\oplus I^{p+q+1}\oplus \widetilde{I}^{p+q}$ for $p>0$.
By direct computation, $d_2^{i1}$ is found to be $F(\eta\circ\tilde\eta)=F(\eta)\circ F(\tilde\eta)$, and this is the composite of the connecting homomorphisms from the statement of the lemma.
\end{proof}

\bibliographystyle{plain}
\bibliography{coho-unirat}

\end{document}